\newtheorem{theorem}{Theorem}[section]
\newtheorem*{tobetheorem*}{Trying to prove}
\newtheorem*{idea*}{Idea}
\newtheorem{lemma}[theorem]{Lemma}
\newtheorem{corollary}[theorem]{Corollary}
\theoremstyle{remark}
\newtheorem{remark}[theorem]{Remark}
\theoremstyle{definition}
\newtheorem{definition}[theorem]{Definition}
\NewDocumentCommand{\mgn}{}{\mathcal M_{g,n}}
\NewDocumentCommand{\sgn}{}{\mathcal S_{g,n}}
\NewDocumentCommand{\mgnb}{}{\overline{\mathcal M}_{g,n}}
\NewDocumentCommand{\bg}{}{\textbf{G}}
\DeclarePairedDelimiterX{\inp}[2]{\langle}{\rangle}{#1, #2}
\DeclareMathOperator{\crit}{Crit}
\DeclareMathOperator{\ind}{ind}
\DeclareMathOperator{\sys}{sys}
\DeclareMathOperator{\syst}{sys_T}
\numberwithin{equation}{section}
\title[Stability phenomena in $\mgnb$ via Morse theory]{Stability phenomena in Deligne--Mumford compactifications via Morse theory}
\author{Changjie Chen}
\begin{document}

\begin{abstract}
    We study the rational homology of the Deligne--Mumford compactification $\overline{\mathcal M}_{g,n}$ of the moduli space of stable curves via a family of Morse functions, namely the $\text{sys}_T$ functions. Exploiting the geometric and Morse properties of $\text{sys}_T$, including the existence of an index gap and additivity of the Morse index upon gluing maps, we reprove that in low degrees the homology of $\overline{\mathcal M}_{g,n}$ is supported entirely on the boundary $\partial \overline{\mathcal M}_{g,n}$, providing a geometric perspective complementary to Harer’s classical result on the virtual cohomological dimension.

    Furthermore, we establish finite generation and stability phenomena for the rational homology across all genera and numbers of marked points. We show that for each degree $k$, a finite set of homology elements generates all $k$-th homology classes via attaching copies of thrice-marked $\mathbb{P}^1$. This result also recovers previously known stability in the number of marked points, such as Tosteson's theorem.

\end{abstract}

\maketitle

\section{Introduction}
% We present a stability phenomenon in the homology of Deligne--Mumford compactifications $\mgnb$, in the form that all homology of them of a given degree $k$ is finitely generated

The moduli space of stable curves $\mgnb$ is a central object in algebraic geometry and geometric topology. Its rich structure combines complex analytic, hyperbolic, and combinatorial perspectives. Understanding the topology of $\mgnb$, particularly its rational homology in various degrees, remains a fundamental question.

In this paper, we study the low-degree homology and stability phenomena of $\mgnb$ via the \emph{$\syst$ functions}, a family of $C^2$-Morse functions. These functions, inspired by systolic geometry, encode geometric information about closed geodesics on hyperbolic surfaces and continuously extend to the Deligne--Mumford compactification. The $\syst$-Morse theoretic techniques combine geometric, combinatorial, and Morse-theoretic ideas, offering a novel approach to understanding the topology of moduli spaces of curves.

Our first result concerns the low-degree homology of $\mgnb$. By exploiting the $\syst$-Morse theory and the structure of the Deligne--Mumford boundary, we reprove:

\begin{theorem}[Low-degree homology is supported on the boundary, Theorem~\ref{isomorphism of low degree homology}]
    For any $k>0$, when $g+n$ is sufficiently large, in particular when $\ind(g,n)>k+1$, the inclusion
    \[
    i\colon \partial\mgnb\to\mgnb
    \]
    induces an isomorphism
    \[
    i_*\colon H_k(\partial \mgnb;\mathbb Q) \xrightarrow{\cong} H_k(\mgnb;\mathbb Q).
    \]
\end{theorem}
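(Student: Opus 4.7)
The plan is to translate the statement into vanishing of relative rational homology via the long exact sequence of the pair $(\mgnb,\partial\mgnb)$. Since vanishing of both $H_k$ and $H_{k+1}$ of the pair forces $i_*$ to be an isomorphism on $H_k$, it suffices to show $H_j(\mgnb,\partial\mgnb;\mathbb Q)=0$ for all $j\le k+1$ under the hypothesis $\ind(g,n)>k+1$.

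My approach is to run Morse theory with $\syst$ treating $\partial\mgnb$ as the bottom stratum. By construction $\syst$ extends continuously to $\mgnb$, vanishes exactly on the boundary, and is strictly positive on the interior $\mgn$, so every critical point lies in $\mgn$. For small $\epsilon>0$, the sublevel set $\{\syst\le\epsilon\}$ is a collar of $\partial\mgnb$ that deformation retracts onto it. Sweeping upward through critical values, each interior critical point of Morse index $p$ attaches a $p$-handle to this collar, giving $(\mgnb,\partial\mgnb)$ the structure of a relative CW pair whose $p$-cells are in bijection with the interior critical points of index $p$.

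The geometric heart of the argument is then the index gap advertised in the abstract: every critical point of $\syst$ in $\mgn$ has Morse index at least $\ind(g,n)$. Granting this, the relative CW complex has cells only in dimensions $\ge\ind(g,n)$, so $H_j(\mgnb,\partial\mgnb;\mathbb Q)=0$ whenever $j<\ind(g,n)$. The hypothesis $\ind(g,n)>k+1$ covers the two degrees $j\in\{k,k+1\}$, and the long exact sequence concludes.

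Two technical points need care. First, $\mgnb$ is a Deligne--Mumford stack rather than a smooth manifold, which is precisely why rational coefficients appear: the orbifold loci are finite quotient singularities invisible to $H_*(-;\mathbb Q)$, and the handle-attachment can be run on appropriate local manifold covers and then descended. Second, $\syst$ is only $C^2$, but the usual Morse lemma still applies at non-degenerate critical points of $C^2$ functions, so the handle decomposition remains valid. I expect the truly delicate step to be the index gap itself, which must be established by analyzing the Hessian of $\syst$ in terms of short geodesics and combining with the additivity of the Morse index upon gluing highlighted in the abstract; this is the main obstacle and presumably occupies the bulk of the paper's earlier technical work, while the present theorem is the relatively clean homological payoff.
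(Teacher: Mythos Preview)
Your overall strategy---run $\syst$-Morse theory upward from a collar of $\partial\mgnb$, observe that all further handles have index at least $\ind(g,n)$, and read off the homology consequence---is exactly the paper's, modulo packaging: the paper passes to a finite manifold cover and argues $H_k(B)\cong H_k(M)$ directly from the handle attachment, while you route the same content through the long exact sequence of the pair and vanishing of $H_j(\mgnb,\partial\mgnb;\mathbb Q)$ for $j\le k+1$.

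There is, however, a genuine misconception in your setup. It is \emph{not} true that $\syst$ vanishes on $\partial\mgnb$, nor that every critical point lies in $\mgn$: by Theorem~\ref{Morse property}, a boundary point $X\in\mathcal M_\bg$ is $\syst$-critical exactly when each component $X_v$ is critical in its own $\mathcal M_{g(v),n(v)}$, so the boundary is full of critical points (this is precisely where the index additivity you invoke lives). What is actually true is that on $\partial\mgnb$ at least one geodesic has length $0$, so the sum in Definition~\ref{syst definition} contains a term $e^0=1$ and hence $\syst\le0$ there; all boundary critical points therefore sit below any small positive level $\delta$ and are absorbed into the collar, while handles attached above level $\delta$ come only from interior critical points and so have index $\ge\ind(g,n)$. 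Your conclusion survives with this correction. You also assert the collar retraction without argument; the paper supplies it via the uniform estimate $|\syst-\sys|<CT$, which places $\syst^{-1}([0,\delta])$ inside the $\epsilon$-thin part, together with the transversality $\inp{\nabla^{\text{WP}}\syst}{\nabla l_i}>0$ near each boundary stratum, which produces the product structure $B\times[0,\delta]\cong\syst^{-1}([0,\delta])$.
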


This theorem implies that, in the stable range, low-degree homology classes are determined by boundary strata, a phenomenon revealed by Harer's theorem on the \emph{virtual cohomological dimension} of $\mgn$ in \cite{harer1986virtual}.

Our second main result addresses \emph{stability phenomena} across the moduli spaces $\mgnb$ as $(g,n)$ varies. Let $\ind(g,n)$ be the lowest Morse index of $\syst$-critical points in $\mgn$. Using the combinatorial framework of stable graphs and the $\syst$-Morse theory, we show:

\begin{theorem}[Finite generation of homology over attaching, Theorem~\ref{finite generation}]
    For every $k\ge0$, there exists a finite set
    \[
    \Xi=\{\xi_i\in H_k(\overline{\mathcal M}_{g_i,n_i};\mathbb Q)\}
    \]
    that generates
    \[
    \bigoplus_{(g,n)} H_k(\mgnb;\mathbb Q)
    \]
    via attaching thrice-marked $\mathbb P^1$'s. Here, the stable range is given by $\ind(g,n)>k$.
\end{theorem}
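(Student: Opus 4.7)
The plan is to combine the boundary reduction of Theorem~1.1 with the stable-graph stratification of $\partial\mgnb$ and the additivity of the $\syst$-Morse index under gluing. First I would strengthen Theorem~1.1 from $\ind(g,n)>k+1$ to $\ind(g,n)>k$ by a refined cellular analysis at index exactly $k$: interior index-$k$ critical points can only cancel homology in degree $k$, never create it, so the isomorphism $H_k(\partial\mgnb;\mathbb Q)\xrightarrow{\cong}H_k(\mgnb;\mathbb Q)$ persists throughout the stable range $\ind(g,n)>k$.

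Next I would stratify the boundary as $\partial\mgnb=\bigcup_\Gamma D_\Gamma$, where $\Gamma$ ranges over stable graphs with at least one edge, and each $D_\Gamma$ is the image of a clutching map from $\prod_{v\in V(\Gamma)}\overline{\mathcal M}_{g_v,n_v}/\mathrm{Aut}(\Gamma)$. The associated Mayer--Vietoris spectral sequence expresses $H_k(\partial\mgnb;\mathbb Q)$ in terms of the rational homology of these smaller moduli spaces. Among the stable graphs, the distinguished one consisting of the main curve with a single $\mathbb P^1_{0,3}$ component attached at a marked point realizes the attaching map $\overline{\mathcal M}_{g,n}\hookrightarrow\overline{\mathcal M}_{g,n+1}$. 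Since $\overline{\mathcal M}_{0,3}$ is a point, Morse-index additivity shows that attaching a $\mathbb P^1_{0,3}$ contributes zero to the index, preserving the $\syst$-critical structure of the main factor.

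The induction then proceeds on $3g-3+n$: in the stable range, every class in $H_k(\mgnb;\mathbb Q)$ is a pushforward from the boundary, so one repeatedly detaches $\mathbb P^1_{0,3}$-attached components whenever possible, decreasing $n$ while fixing $g$. The procedure terminates at pairs $(g_i,n_i)$ where either $\ind(g_i,n_i)\le k$ or no $\mathbb P^1_{0,3}$ is detachable. Since $\ind(g,n)\to\infty$ with $g+n$ by the index-gap estimates of the paper, only finitely many such pairs occur, and each $H_k(\overline{\mathcal M}_{g_i,n_i};\mathbb Q)$ is finite dimensional. Assembling a basis of each yields the finite set~$\Xi$.

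The main obstacle will be to control the contributions of the genuinely non-attaching boundary strata, namely the non-separating gluing $\overline{\mathcal M}_{g-1,n+2}\to\mgnb$ and the general separating gluing $\overline{\mathcal M}_{g_1,n_1+1}\times\overline{\mathcal M}_{g_2,n_2+1}\to\mgnb$, which do not factor through $\mathbb P^1_{0,3}$-attachings. I expect to handle these via the Künneth decomposition combined with Morse-index additivity on each factor: in the stable range, the degree-$k$ pushforward contributions must concentrate the degree on one or two factors while leaving the rest in degree zero, so that by the induction hypothesis the resulting classes already lie in the $\mathbb P^1_{0,3}$-attaching span of~$\Xi$. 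Specializing this argument to fixed $g$ recovers Tosteson's FI-module generation theorem, as $\mathbb P^1_{0,3}$-attaching coincides with the standard FI-module structure of adding a marked point.
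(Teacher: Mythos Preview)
Your approach differs substantially from the paper's, and the gap you flag as ``the main obstacle'' is genuine and not resolved by the sketch you give.

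The paper does not argue by pushing to $\partial\mgnb$ and running a Mayer--Vietoris spectral sequence with induction on $3g-3+n$. It works directly at the level of $\syst$-critical points. The key lemma (Lemma~5.3) says that if a stratum $\mathcal M_{\bg}$ lies in $\sgn([k])$, then every vertex $v$ has $\dim\mathcal M_{g(v),n(v)}$ uniformly bounded in terms of $k$, and only boundedly many vertices have $\dim\mathcal M_{g(v),n(v)}>0$; all remaining vertices are of type $(0,3)$, i.e.\ thrice-marked $\mathbb P^1$'s. This is immediate from index additivity together with $\ind(g(v),n(v))\ge 1$ whenever the factor has positive dimension. Consequently every index-$k$ critical point in any $\mgnb$ is obtained from one of finitely many ``core'' critical points by attaching copies of $\mathbb P^1_{0,3}$ (Corollary~5.4), and Lemma~5.5 transfers this to homology via Part~III of Theorem~\ref{Morse property}. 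There is no spectral sequence and, crucially, no need to factor arbitrary clutching maps through $\mathbb P^1$-attachings: the structure of the critical locus already forces almost every vertex of the relevant stable graphs to be a $(0,3)$-vertex.

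Your proposed K\"unneth fix does not close the gap. Suppose a class in $H_k(\mgnb;\mathbb Q)$ arises as the pushforward of $\xi_1\otimes\xi_2$ under a general separating gluing $\overline{\mathcal M}_{g_1,n_1+1}\times\overline{\mathcal M}_{g_2,n_2+1}\to\mgnb$ with neither factor equal to $\overline{\mathcal M}_{0,3}$. Even if, by your inductive hypothesis, each $\xi_j$ lies in the $\mathbb P^1$-attaching span of $\Xi$, the gluing map itself is not a composition of $\mathbb P^1$-attachings, so there is no a~priori reason the pushforward lies in that span. Your claim that ``the degree-$k$ pushforward contributions must concentrate the degree on one or two factors'' conflates Morse index with homological degree: index additivity constrains which \emph{critical points} sit in a stratum, not how K\"unneth degree distributes across a product. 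To rescue the argument you would have to show that the two $\mathbb P^1$-attaching trees producing $\xi_1$ and $\xi_2$ can be re-rooted and merged into a single $\mathbb P^1$-attaching tree over a bounded core---which is exactly the content of the paper's Lemma~5.3 and Corollary~5.4, obtained there without the detour through the boundary. (A minor point: the relevant categorical structure in Tosteson's work is $\mathbf{FS}^{\mathrm{op}}$, not FI.)
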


This result provides a concrete description of the generation of homology by boundary strata. It recovers previously known stability results in $n$, the number of marked points, such as Tosteson's theorem \cite{tosteson2021stability}. 

The structure of the paper is as follows. In Section~\ref{sec:stable-graphs}, we review stable graphs, strata of $\mgnb$, and the definition of the $\syst$ functions. Section~\ref{sec:wp-metric} discusses the Weil--Petersson metric and potential substitutes for the $\syst$ functions. Section~\ref{sec:low-degree-homology} proves the low-degree homology isomorphism, and Section~\ref{sec:stability} establishes the finite generation and stability phenomena for the rational homology of $\mgnb$. Finally, in Section~\ref{others work}, we relate our results to previous work.

\subsection{Acknowledgements:}
The author is grateful to Louis Hainaut, Siddarth Kannan, Jeremy Kahn, and Philip Tosteson for valuable conversations and insights that contributed to the development of this work. The author thanks Oscar Randal-Williams for thoughtful comments and suggestions.

\section{Stable graphs and the $\syst$ functions}
\label{sec:stable-graphs}

We review standard definitions used to study Deligne--Mumford compactifications and discuss the construction and geometric and Morse properties of the $\syst$ functions.

\subsection{Stable graph and Deligne--Mumford compactification}

We follow standard definitions in the literature on Deligne--Mumford compactifications (see, e.g., \cite{deligne1969irreducibility,chan2021moduli}).

\begin{definition}
Let $X$ be a stable curve with marked points $p_1,\dots, p_n$ over $\mathbb{C}$. The \emph{stable graph $\bg=(G,g)$ dual to $X$} is a weighted multi-graph consisting of the following data:

\begin{itemize}
    \item The set $V(G)$ of vertices: each $v$ corresponds to an irreducible component $X_v$.
    \item The set $E(G)$ of edges: each $e=e(v,w)$ corresponds to a node connecting $X_v$ and $X_w$.
    \item The set $H(G)$ of half-edges: each $h=h(v)$ corresponds to a marked point on $X_v$.
    \item A genus function $g\colon V(G) \to \mathbb N$ sending $v$ to the genus of the normalization of $X_v$.
    \item A marking function $m\colon \{1,\dots,n\}\to V(G)$ sending $i$ to $v$ when $p_i$ lies in $X_v$.
\end{itemize}
\end{definition}

One may define an abstract stable graph in the same way as long as it satisfies the stability condition
\[
2g(v)-2+n(v)>0,
\]
where $n(v)=\# m^{-1}(v)$.

Conversely, given an abstract stable graph satisfying the stability condition, one can construct a dual stable curve. The \emph{stratum $\mathcal M_\bg$ dual to a stable graph $\bg$} is the collection of all stable curves dual to the same stable graph. Formally, the stratum $\mathcal M_\bg$ is given by
\[
\mathcal M_\bg=\prod_{v\in V(G)} \mathcal M_{g(v),n(v)}/\text{Aut}(\bg),
\]
where $\text{Aut}(\bg)$ denotes the automorphism group of the stable graph $\bg$.

\subsection{The Morse functions}

There is a canonical correspondence between algebraic structures and hyperbolic structures on a topological real surface. Thus, the moduli space $\mgn$ can be defined as the quotient of the space of hyperbolic metrics by the action of orientation-preserving diffeomorphisms.

\begin{definition}
\label{syst definition}
Let $0<T<1$ be a parameter. For a hyperbolic structure $X\in\mgn$, define
\[
\syst(X)=-T\log\left(\sum_{\gamma \text{ s.c.g. on } X} e^{-\frac1Tl_\gamma(X)}\right),
\]
where s.c.g. stands for simple closed geodesic. The definition of it on the Deligne--Mumford compactification $\mgnb$ is by continuous extension.
\end{definition}

\begin{definition}
\label{eutacticity definition}
    A point $X$ in the Teichmüller space $\mathcal T$ is \emph{eutactic} if the origin lies in the interior of the convex hull of $\{\nabla l_\gamma\}_{\gamma\in S(X)}$ in the tangent space $T_X\mathcal T$, with respect to the subspace topology, where $S(X)$ is the set of shortest closed geodesics on $X$, $l_\gamma$ is the geodesic-length function associated to $\gamma$, and $\nabla$ is the (Weil--Petersson) gradient vector. In this case, the \emph{eutactic rank} of $X$ is defined as the rank of the set ${\nabla l_\gamma}_{\gamma\in S(X)}$.
\end{definition}

\begin{definition}
    For a set $B=\{\beta_1,\dots,\beta_k\}$ of pairwise disjoint essential closed geodesics on a given hyperbolic surface, let $D_B\subset\mgnb$ be the stratum where the curves in $B$ are pinched. Extend $B$ to a pants decomposition and consider the standard Fenchel--Nielsen coordinates $(l_\gamma,\tau_\gamma)_\gamma$. For $X\in D_B$, define the local chart
    \[
    (l_\gamma^{\chi(\gamma)},\tau_\gamma)_\gamma/\text{Aut}(X),
    \]
    where $\chi(\gamma)=\frac{1}{2}$ if $\gamma\in B$ and 1 otherwise. This endows $\mgnb$ with the the \emph{root-geodesic-length smooth orbifold structure} on $\mgnb$.
\end{definition}

A pointed subspace of $\mgnb$ is called \emph{stratifically stable} if the subspace is contained in the relative Deligne--Mumford compactification of the stratum the point lies in. The following theorem establishes the Morse property of $\syst$, which is foundational to subsequent Morse homology results.

\begin{theorem}[\cite{chen2023morse}]
\label{Morse property}
For sufficiently small $T>0$, we have:
\begin{itemize}
    \item The $\syst$ function is $C^2$-Morse on the Deligne--Mumford compactification $\overline{\mathcal M}_{g,n}$, equipped with the root-geodesic-length smooth orbifold structure.

    \item In the moduli space $\mathcal M_{g,n}$, $\syst$-critical points $p_T$ correspond to eutactic points $p$, with Weil--Petersson distance $d_{WP}(p_T,p) < C T$, and Morse index $\ind(p_T)$ equals the eutactic rank of $p$.

    \item For $X\in\mathcal M_{\textbf{G}}\subset\mgnb$, the following are equivalent:

    (1) $X$ is a critical point of $\syst$ on $\mgnb$;
    
    (2) For any $v\in V(G)$, $X_v$ is a critical point of $\syst$ on $\mathcal M_{g(v),n(v)}$.
    
    And in this case,
    \[
    \ind(X)=\sum_{v\in V(\textbf{G})}\ind(X_v).
    \]

    \item The Weil--Petersson gradient flow of $\syst$ on $\overline{\mathcal M}_{g,n}$ is well defined, and the unstable manifold at any point is stratifically stable. The closure of any $\syst$-flow line is contained in a single stratum or hits a critical point downward in its boundary stratum transversely.
\end{itemize}
\end{theorem}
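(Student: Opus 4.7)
The plan is to prove the four bullets in turn, using the real analyticity of geodesic-length functions on Teichmüller space and Wolpert's analysis of the Weil--Petersson metric near the compactifying divisors. First I would check smoothness on the interior: on $\tgn$ each $l_\gamma$ is real analytic, and for $T$ small the series $\sum_\gamma e^{-l_\gamma/T}$ converges uniformly on compact sets because exponential decay beats the polynomial growth in the count of closed geodesics of bounded length. To extend smoothness across a boundary stratum in the root-geodesic-length chart, I would split the sum into three parts: geodesics supported on a single stable component, which extend smoothly by the interior argument applied to that component; the pinching geodesics themselves, contributing $e^{-l_\gamma/T} = e^{-u^2/T}$ in the coordinate $u = \sqrt{l_\gamma}$, which is real analytic; and geodesics crossing a node, whose lengths blow up like $-c\log l_\gamma$ and hence contribute $l_\gamma^{c/T} = u^{2c/T}$, which is at least $C^2$ for $T$ small.

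Next I would identify the critical set. Write $\nabla\syst = \sum_\gamma w_\gamma \nabla l_\gamma$ with probability weights $w_\gamma = e^{-l_\gamma/T}/Z$; for small $T$ these concentrate exponentially on the shortest-geodesic set $S(X)$, so the equation $\nabla\syst = 0$ is a small perturbation of the convex-hull condition defining eutacticity. A contraction-mapping argument anchored at a eutactic point $p$ then produces a unique nearby $\syst$-critical point $p_T$ with $d_{WP}(p_T,p) = O(T)$, and a Hessian computation identifies the negative eigendirections with infinitesimal deformations that move the weighted barycenter of $\{\nabla l_\gamma\}$ off the origin while remaining inside its convex hull, giving that $\ind(p_T)$ equals the eutactic rank of $p$.

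For the stratum bullet I would exploit the local product structure of $\mgnb$ near $\mathcal M_\bg$. At a point $X\in\mathcal M_\bg$ the pinching curves and the node-crossing curves all have infinite length and so contribute nothing, while the remaining short geodesics partition cleanly among the components, giving $Z(X) = \sum_v Z(X_v)$; derivatives in directions tangent to the stratum then decompose as a direct sum of the component gradients. In normal directions the contributions take the forms $e^{-u^2/T}$ and $u^{2c/T}$, both with vanishing first derivative at $u = 0$ once $T$ is small, so the normal gradient automatically vanishes on the stratum, yielding the equivalence of the two critical-point conditions. A direct second-derivative computation in the same coordinates then shows that the normal Hessian is strictly positive for $T$ small, which together with the block-diagonal tangent structure yields $\ind(X) = \sum_v \ind(X_v)$.

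Finally, the Weil--Petersson gradient flow is well defined because $\syst$ is $C^2$ and the WP metric is smooth on the interior and compatible with the root-geodesic-length orbifold structure. The stratified behavior then follows from the Hessian analysis above: along normal directions to any stratum the Hessian at a critical point is positive, so the unstable manifold is tangent to the stratum and stays in its relative Deligne--Mumford closure; interior flow lines cannot escape into a higher-codimension stratum, and the usual $C^2$-Morse compactness argument shows that every limit of a flow line is a critical point reached transversely. Throughout, the main obstacle is Step~1: controlling the infinite sum over node-crossing homotopy classes uniformly enough in the $\sqrt{l_\gamma}$ coordinates to yield an honest $C^2$ function on all of $\mgnb$, rather than merely a $C^2$ function on each open stratum separately.
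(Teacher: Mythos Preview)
The paper does not prove this theorem in full: the first three bullets, and the well-definedness and stratific-stability portions of the fourth, are quoted from \cite{chen2023morse}, so there is no proof in the present paper to compare your sketch against for those parts. Your outline is a plausible roadmap for what such a proof would look like, and you correctly identify the uniform $C^2$ control over node-crossing geodesics in the $\sqrt{l_\gamma}$ coordinates as the main technical obstacle; the contraction-mapping step in bullet two would also need substantial detail to make rigorous.

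The one clause that \emph{is} proved in this paper is the final transversality statement, established via the general lemma immediately preceding Corollary~\ref{transverse to submanifold}. There the argument is explicit: at a critical point $p$ lying in a Morse submanifold $N$, take the orthogonal splitting $T_pM = T_pN \oplus \nu_pN$ and compatible local coordinates $(x,y)$, write $f = f(p) + x^TAx + y^TBy + x^TCy + g$, and observe that the Morse-submanifold condition forces $C=0$; the linearized flow is then $(e^{2At}x_0, e^{2Bt}y_0)$, whose normal component is manifestly transverse to $\{y=0\}$. Your treatment of this point is a one-line appeal to ``the usual $C^2$-Morse compactness argument,'' which is too coarse: compactness of flow lines gives you that limits are critical points, but transversality to the stratum is a separate geometric claim that genuinely requires the block-diagonal Hessian structure the paper isolates. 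Your earlier observation that the normal Hessian is positive definite is exactly the right input, but you need to use it again here rather than invoke compactness.
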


The transversality statement in the last item strengthens the original theorem and is established as Lemma~\ref{transverse to submanifold}.

An important property of the $\syst$ functions is the existence of an index gap.

\begin{definition}
    Let $\ind(g,n)$ be the lowest Morse index of $\syst$-critical points in $\mgn$, that is,
    \[
    \ind(g,n)=\min_{X\in\crit(\syst|_{\mgn})}\ind(X).
    \]
\end{definition}

\begin{theorem}[\cite{chen2023index}]
\label{index gap theorem}
There exists a universal constant $C>0$ such that the lowest Morse index satisfies
\[
\ind(g,n) > C \log\log(g+n).
\]
\end{theorem}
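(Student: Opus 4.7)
The plan is to reduce the theorem to a geometric statement about the eutactic rank at eutactic points in the smooth moduli space $\mgn$, then combine a Buser-type bound on the systole with a convex-geometric count of gradient vectors in the low-dimensional span of the systole-geodesic gradients.

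First I would invoke Theorem~\ref{Morse property} to transfer the Morse-theoretic question to one about eutacticity. Every $\syst$-critical point lies near an eutactic point whose eutactic rank equals the Morse index, and the additivity of the index under the factorization $\mathcal M_\bg = \prod_v \mathcal M_{g(v),n(v)}/\mathrm{Aut}(\bg)$ shows that the minimum index on any stratum is attained on a single vertex factor. So it suffices to bound the eutactic rank of every eutactic $X \in \mgn$ from below by $C \log\log(g+n)$.

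Next I would marshal the geometric inputs. Classical Buser-type systole estimates give $\sys(X) \le O(\log(g+n))$, so every $\gamma \in S(X)$ has length logarithmic in $g+n$. Standard Weil--Petersson formulas then control $\nabla l_\gamma$ and its pairwise inner products in terms of the local geometry of $\gamma$. At an eutactic point of rank $r$ the vectors $\{\nabla l_\gamma\}_{\gamma \in S(X)}$ span an $r$-dimensional subspace and contain the origin in the interior of their convex hull, so in particular $|S(X)| \ge r+1$ and the gradients behave like a packing in a low-dimensional Euclidean ball.

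The hard part, and the step I expect to dominate the argument, is extracting the double-logarithmic lower bound on $r$. I would pursue two parallel routes. The first is a packing argument: gradients of distinct shortest geodesics on $X$ have Weil--Petersson angles bounded away from zero, so at most $\exp(O(r))$ such vectors fit into the $r$-dimensional span; combined with a Gauss--Bonnet/pigeonhole count forcing $|S(X)| \gtrsim \log(g+n)$, this yields $r \gtrsim \log\log(g+n)$. The second is a recursive decomposition: if the shortest geodesics do not fill $X$, restricting to a complementary subsurface of complexity $\Omega(\sqrt{g+n})$ produces an eutactic configuration on a smaller surface, and iterating an inequality of the shape $\ind(g,n) \ge 1 + \ind(\sqrt{g+n})$ again unwinds to $\log\log(g+n)$. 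The principal obstacle is implementing whichever route uniformly in $(g,n)$ and bookkeeping constants through the iteration; the double-logarithmic strength of the bound is the telltale sign that a genuinely recursive argument rather than a direct linear-algebraic count is needed, and pinning down a clean combinatorial model for the configuration of shortest geodesics at a low-rank eutactic point is what makes the argument delicate.
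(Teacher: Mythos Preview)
The paper does not contain a proof of this theorem: it is quoted verbatim from the companion paper \cite{chen2023index}, with no argument given here beyond the citation. So there is no ``paper's own proof'' to compare your proposal against, and your plan stands or falls on its own merits as a sketch of what \cite{chen2023index} might do.

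On those merits, the reduction to eutactic rank via Theorem~\ref{Morse property} is correct and is surely the starting point of any proof. But your two proposed routes both have a genuine gap at the same place. In the packing route you assert a pigeonhole count forcing $|S(X)|\gtrsim\log(g+n)$; this is exactly what is not clear. Eutacticity of rank $r$ only gives $|S(X)|\ge r+1$, and since you are trying to rule out small $r$ you cannot assume many systoles a priori. There is no Gauss--Bonnet or area argument that forces an \emph{eutactic} surface to have many shortest geodesics; a surface can perfectly well have a unique systole, and the content of the theorem is precisely that such a surface cannot be eutactic once $g+n$ is large. In the recursive route, the assertion that cutting along the (non-filling) systoles yields an eutactic configuration on a subsurface of complexity $\Omega(\sqrt{g+n})$ is unsupported: eutacticity is a condition on gradients in the tangent space of the ambient Teichm\"uller space, and it does not obviously restrict to subsurfaces, nor is the square-root drop in complexity explained. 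The double-log growth does suggest a recursion, as you note, but the mechanism that produces the recursion---presumably some dichotomy between the systoles filling a large subsurface (forcing high rank directly) versus leaving a large complementary piece (allowing induction)---needs to be made precise, and that is the actual content of \cite{chen2023index}.
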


\begin{remark}
\label{limitation}
    At present, this Morse-theoretic approach does not yield optimal numerical results for the (co)homology, primarily because relatively little is known about the following aspects:

    (1) The lower bound on $\ind(g,n)$ given above is not sharp. As shown in \cite{bourque2020hyperbolic}, a linear bound cannot be achieved, in contrast with many results such as the stable range for the homology of $\mgn$. Nevertheless, one might still expect a logarithmic bound.

    (2) In general, the number of critical points exceeds the dimension of the homology in the corresponding degree. When a Riemannian metric is chosen, the critical points interact with each other via gradient flow lines, and the correspondence between critical points and homology generators remains largely unexplored.
\end{remark}

\section{Subtleties in using the Weil--Petersson metric}
\label{sec:wp-metric}

\subsection{Weil--Petersson metric near the boundary}

The Weil--Petersson metric, induced by the natural $L^2$-pairing on the space of (0,2)- or (-1,1)-tensors, is known to be incomplete on Teichmüller space. Consequently, when extended to the Deligne--Mumford compactification $\mgnb$, it defines an orbifold Riemannian metric that is singular on the boundary $\partial\mgnb$. Wolpert \cite{wolpert2009extension} computed the expansion of the Weil--Petersson connection near the boundary of $\mgnb$.

The degeneracy of the Weil--Petersson metric may pose challenges when applying Morse theory, even after small perturbations. Nevertheless, it is expected to be feasible with careful attention to finiteness conditions. For convenience, we consider a globally well-defined ``twin'' metric that shares key features with the Weil--Petersson metric in the Morse sense.

Let $S=\{\gamma_i\}$ be a set of mutually disjoint simple closed geodesics, and let $\mathcal S_S$ denote the stratum obtained by pinching all elements of $S$. Masur \cite{masur1976extension} showed that, near $\mathcal S_S$, the Weil--Petersson metric admits the expansion
\[
g_{\text{WP}}=\frac{\pi}{2}\sum_{i}\left( \frac{dl_i^2}{l_i}+\frac{l_i^3}{4\pi^4}d\tau_i^2 \right)+g_{\text{WP}}^{\mathcal S}+\text{higher order terms},
\]
where $(l_i,\tau_i)_{i}$ are the Fenchel--Nielsen coordinates associated to $S$, and $g_{\text{WP}}^{\mathcal S}$ is the Weil--Petersson metric on the stratum $\mathcal S_S$. Hence, up to first order, the tangent subspace normal to the stratum $\mathcal S_S$ is spanned by the coordinates $\{(l_i,\tau_i)\}$.

\begin{definition}[\cite{chen2023morse}]
    The \emph{root-geodesic-length} differential orbifold structure of $\mgnb$ is defined by assigning local charts
    \[
    (l_i^\frac{1}{2},\tau_i)_{i=1}^k + \text{Fenchel--Nielsen coordinates for $S^C$ in the surface}
    \]
    near the stratum $\mathcal S_S$ where $S=\{\gamma_1,\dots,\gamma_k\}$.
\end{definition}

Under the root-geodesic-length differential structure, the Weil--Petersson metric is well defined in the radial directions but blows up in the angular directions with respect to $S$ near $\mathcal S_S$.

\subsection{Gradient systems}

We use the results of Bárta, Chill, and Fašangová \cite{barta2012every} and Bílý \cite{bily2014transformations}, which guarantee the existence of a Riemannian metric that turns a \emph{gradient-like} vector field into an actual gradient vector field, both with respect to a given differentiable function.

Let $V$ be a continuous vector field on a manifold $M$. A \emph{stationary point} of $V$ is where $V$ vanishes.

\begin{definition}
    A $C^1$-continuous function $f\colon M\to\mathbb R$ is called a \emph{strict Lyapunov function} for the ordinary differential equation
    \[
    \dot{x} + V(x) = 0,
    \]
    if
    \[
    \inp{f'(x)}{V(x)}>0,
    \]
    whenever $V(x)\neq0$.
\end{definition}

Bárta, Chill, and Fašangová \cite{barta2012every} proved the strict Lyapunov condition is sufficient for the existence of such a Riemannian metric away from the stationary points of $V$, and Bílý \cite{bily2014transformations} gave a condition when it admits a global extension.

\begin{theorem}[\cite{barta2012every,bily2014transformations}]
    Let $V$ be a continuous vector field on a manifold $M$, and let $f\colon M\to \mathbb R$ be a $C^1$-continuous strict Lyapunov function for
    \[
    \dot{x} + V(x) = 0,
    \]
    then there exists a Riemannian metric on the complement of the stationary points of $V$.

    Furthermore, if at every stationary point $x$ of $V$, $V'$ is regular, and $f''V'^{-1}$ defines a scalar product, then there exists such a Riemannian metric on $M$.
\end{theorem}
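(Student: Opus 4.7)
The plan is to construct the Riemannian metric pointwise, first on the complement of the stationary set and then globally under the extra hypotheses. The guiding idea is that the condition $V=\nabla_g f$ is equivalent to $g(V,\cdot)=df(\cdot)$, which at each point pins down the metric in the $V$-direction and leaves freedom in directions tangent to the level sets of $f$.

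Away from the stationary set, since $V(x)\neq 0$ and $\inp{f'(x)}{V(x)}>0$, we have $df_x\neq 0$, so $T_xM=\mathbb{R}\cdot V(x)\oplus\ker(df_x)$. Fix any auxiliary smooth Riemannian metric $h$ on $M$ and define $g_x$ by the requirements
\[
g_x(V(x),V(x))=\inp{f'(x)}{V(x)},\qquad V(x)\perp_{g_x}\ker(df_x),\qquad g_x|_{\ker(df_x)}=h|_{\ker(df_x)}.
\]
A direct check on decompositions $w=aV(x)+w'$ with $w'\in\ker(df_x)$ yields $g_x(V(x),w)=a\inp{f'(x)}{V(x)}=df_x(w)$, so $V=\nabla_g f$. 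Continuity (and smoothness, when $V$, $f$ are smooth enough) of $g$ follows from the continuity of the splitting, which in turn follows from the continuity of $V$ and $df$ and the strict Lyapunov inequality bounding the angle between them away from zero on compacts of the complement of the stationary set.

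For the second part, fix a stationary point $x_0$ at which $V'(x_0)$ is invertible; this forces $x_0$ to be isolated. In local coordinates one has $V(x)=V'(x_0)(x-x_0)+O(|x-x_0|^2)$ and $df_x=f''(x_0)(x-x_0)+O(|x-x_0|^2)$. The relation $V^i=g^{ij}\partial_jf$ evaluated on the linearization forces the metric matrix at $x_0$ to be $f''(x_0)\circ V'(x_0)^{-1}$, and the hypothesis that this expression defines a scalar product is precisely what is needed for a bona fide inner product on $T_{x_0}M$. One then chooses the auxiliary metric $h$ in a neighborhood of each stationary point so that the away-from-stationary construction limits to $f''V'^{-1}$ at $x_0$ (using the local normal form provided by regularity of $V'(x_0)$), and patches the local and global constructions with a partition of unity subordinate to a cover that separates the isolated stationary points from the rest of $M$.

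The main obstacle is the transition across stationary points: both $g_x(V(x),V(x))=\inp{f'(x)}{V(x)}$ and $|V(x)|^2_h$ vanish there, so the metric's behavior in the $V$-direction is determined by a $0/0$ limit which must be controlled. The regularity of $V'(x_0)$ provides a change of variables that turns this into a quadratic model, and the positivity/symmetry of $f''V'^{-1}$ guarantees the limiting bilinear form is a genuine inner product; without both conditions, no continuous extension can exist. The remaining technical work is to verify that the auxiliary $h$ can be chosen consistently across the finitely many (or discretely many) stationary points so that the patched metric is at least continuous, which is where one invokes the results of \cite{barta2012every,bily2014transformations} for the precise construction.
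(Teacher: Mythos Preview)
The paper does not prove this theorem; it is quoted from \cite{barta2012every,bily2014transformations} and applied as a black box, so there is no in-paper argument to compare your proposal against.

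Your construction on the complement of the stationary set is correct and is essentially the one in \cite{barta2012every}: the strict Lyapunov inequality guarantees $V(x)\notin\ker df_x$, so the splitting $T_xM=\mathbb{R}\,V(x)\oplus\ker df_x$ is well defined and varies continuously, and your three requirements on $g_x$ produce a continuous positive-definite metric satisfying $g_x(V(x),\cdot)=df_x$.

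Your treatment of the extension across stationary points, however, is circular. You correctly deduce that the only candidate for $g(x_0)$ is $f''(x_0)\,V'(x_0)^{-1}$ (whence the hypothesis), and that regularity of $V'(x_0)$ isolates $x_0$. But the step where you ``invoke the results of \cite{barta2012every,bily2014transformations} for the precise construction'' is precisely the theorem you are meant to prove. What is missing is the verification that, with a local choice of the auxiliary metric $h$ satisfying $h(x_0)=f''(x_0)\,V'(x_0)^{-1}$, the metric $g$ you built away from $x_0$ actually extends continuously to $x_0$. Concretely, one must show that the $V$-normalization $g(V,V)=\inp{f'}{V}$ and the $\ker df$-restriction $g|_{\ker df}=h|_{\ker df}$ are compatible in the limit $x\to x_0$, where both $V$ and $df$ vanish and $\ker df_x$ degenerates to all of $T_{x_0}M$. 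This is a genuine computation using the first-order expansions of $V$ and $df$ together with the \emph{symmetry} of $f''V'^{-1}$ (not only its positivity), and it is the actual content of \cite{bily2014transformations} rather than something one can defer to it.
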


In our setting, $V=\nabla^\text{WP}\syst$, which is the stratumwise Weil--Petersson gradient vector of $\syst$, and $f=\syst$. It is easy to check that the conditions are satisfied and therefore, there exists an orbifold Riemannian metric on $\mgnb$ such that the gradient vector of the $\syst$ function coincides with $\nabla^\text{WP}\syst$.

It is clear that this modification of the metric does not affect the Morse properties of the $\syst$ functions, including their critical points and Morse indices.

\begin{remark}

    The choice of such a metric is not unique. Given a (Morse) function $f$ and a Riemannian metric $g$, let $K=\ker \nabla_g f$. Modifying $g$ on $K\times K \subset TM \times TM$ leaves the gradient vector of $f$ unchanged. In the case of the Weil--Petersson metric, which diverges only in twist directions near boundary strata, the stratumwise gradient of $\syst$ defines a global $C^1$ vector field transverse to these directions. By suitably modifying the metric in the twist directions, one obtains a globally well-defined Riemannian metric without changing the gradient vector field.
\end{remark}

\begin{lemma}
    Let $f\colon M\to\mathbb R$ be a Morse function on a manifold $M$ equipped with a Riemannian metric, and $N\subset M$ a Morse submanifold of $M$. Then if the closure of a flow line of $f$ intersects $N$, then it is contained in $N$ or hits a critical point in $N$ transversely.
\end{lemma}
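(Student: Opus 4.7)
The plan is to argue by a dichotomy on whether the flow line ever enters $N$, and then to analyze the asymptotic approach at a limiting critical point using Morse coordinates adapted to $N$.

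First I would unpack the hypothesis that $N$ is a Morse submanifold. I take this to mean that $f|_N$ is itself Morse and that the ambient gradient $\nabla f$ is tangent to $N$ along $N$, so that $N$ is preserved by the gradient flow of $f$. The crucial consequence is a rigidity principle: any flow line of $-\nabla f$ that meets $N$ at a single time remains in $N$ for all times in its domain.

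Second, let $\phi$ be a flow line whose closure intersects $N$. If $\phi(t_0) \in N$ for some $t_0$, flow invariance forces the entire trajectory, and hence its closure, to lie in $N$, giving the first alternative. Otherwise $\phi(t) \notin N$ for any $t$ in the domain, so any point of $N$ contained in the closure of $\phi$ must be an $\alpha$- or $\omega$-limit point of the flow. Standard Morse-theoretic compactness (Palais--Smale type arguments for gradient flows of Morse functions) identifies such limits with critical points of $f$, so there exists a critical point $p \in N$ with $\phi(t) \to p$. Because $\nabla f$ is tangent to $N$ at $p$, the point $p$ is also a non-degenerate critical point of $f|_N$.

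Third, to establish transversality, I would work in local Morse coordinates at $p$. Flow-invariance of $N$ implies that $\mathrm{Hess}(f)(p)$ preserves $T_p N$, so the decomposition $T_p M = E^s \oplus E^u$ into positive and negative eigenspaces restricts to a decomposition $T_p N = (T_p N \cap E^s) \oplus (T_p N \cap E^u)$. The set-theoretic intersection $W^s(p) \cap N$ is then the stable manifold of $p$ inside $N$, with tangent space $T_p N \cap E^s$ at $p$. Since the flow line lies in $W^s(p) \setminus N$, it must have a nontrivial normal component to $T_p N$ at every finite time, so the closure of $\phi$ meets $N$ only at the single point $p$. This is the transverse intersection of the closure with $N$.

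The main obstacle is the precise formulation of transversality. Depending on the relative sizes of the Hessian eigenvalues in tangential versus normal directions, the asymptotic unit tangent of $\phi(t)$ as $t \to \infty$ may or may not itself lie in $T_p N$. What is robust, and is what this argument delivers, is the isolation of the intersection point: the closure meets $N$ only at the single critical point $p$, approached from strictly outside $N$. This is the content of transversality needed for the subsequent Morse-homological arguments, and it follows cleanly from flow-invariance and non-degeneracy of the Hessian restricted to $T_p N$.
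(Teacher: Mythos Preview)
Your approach is essentially the paper's: both argue by the interior/limit dichotomy, both use flow-invariance of $N$ to block-diagonalize the Hessian at $p$ (the paper phrases this as the vanishing of the cross term $C$ in Morse coordinates $f=f(p)+x^TAx+y^TBy+x^TCy+\cdots$ with $N=\{y=0\}$), and both read transversality off the linearized flow $(e^{2At}x_0,e^{2Bt}y_0)$ with $y_0\neq 0$. Your closing caveat is well placed and in fact sharper than the paper, which simply asserts transversality of this explicit flow without addressing whether the asymptotic unit tangent could lie in $T_pN$; your isolated-intersection reading is the robust statement and is all the subsequent corollary needs.
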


\begin{proof}
    Let $\varphi$ be the closure of an $f$-flow line. If $\varphi$ contains a point in $N$ in its interior, then $\varphi$ is contained in $N$ since $N$ is a Morse submanifold of $M$. Suppose $\varphi$ hits a point $p\in N$ from outside $N$, then $p$ lies at the end of $\varphi$ and is a critical point of $f$. Take the orthogonal decomposition of the tangent space as
    \[
    T_pM=T_pN\oplus \nu_pN.
    \]
    Let $(x,y)$ be a compatible local chart around $p$ with $(x,y)(p)=(0,0)$ such that
    \[
    f(x,y)=f(p)+x^TAx+y^TBy+x^TCy+g(x,y)
    \]
    where $A$ and $B$ are nondegenerate matrices, and $\nabla^2 g=0$. Note that
    \[
    \nabla f=(2Ax+Cy,2By+Cx),
    \]
    and therefore, $C=0$ since locally $N=\{y=0\}$.

    Pick $p_0=\varphi(t_0)=(x_0,y_0)$ where $y_0\neq0$, then the flow through $p_0$ is given by
    \[
    \varphi(t-t_0)=(e^{2At}x_0,e^{2Bt}y_0),
    \]
    which is transverse to $N$ if it converges to $p$ as $t\to-\infty$.
\end{proof}

\begin{corollary}
\label{transverse to submanifold}
    The closure of any $\syst$-flow line, with respect to the stratumwise Weil--Petersson metric, is contained in a single stratum or hits a critical point downward in its boundary stratum transversely.
\end{corollary}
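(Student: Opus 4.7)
My plan is to reduce the corollary to a direct application of the preceding lemma, once the strata $\mathcal{M}_{\bg} \subset \mgnb$ are verified to be Morse submanifolds with respect to $\syst$ and the (modified) stratumwise Weil--Petersson metric. The work splits into a verification of the submanifold hypothesis and an invocation of the lemma.

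For the verification step, I would use Theorem~\ref{Morse property}(3): critical points of $\syst$ on $\mgnb$ lying in $\mathcal{M}_{\bg}$ correspond precisely to tuples of critical points on the factors $\mathcal{M}_{g(v),n(v)}$, and $\syst|_{\mathcal{M}_{\bg}}$ is (up to $\text{Aut}(\bg)$) a sum of the $\syst$ functions on the factors, hence Morse. Moreover, the stratumwise Weil--Petersson metric restricts on $\mathcal{M}_{\bg}$ to the product Weil--Petersson metric, so the gradient of $\syst|_{\mathcal{M}_{\bg}}$ equals the restriction of the ambient gradient. In particular, the ambient gradient is tangent to each stratum, and the stratum qualifies as a Morse submanifold in the sense required by the lemma.

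With this in hand, I would apply the preceding lemma with $M = \mgnb$, $f = \syst$, and $N = \mathcal{M}_{\bg}$. Let $\varphi$ be a $\syst$-flow line whose closure intersects the boundary, and let $\mathcal{M}_{\bg}$ be the (lowest) stratum that its closure meets. Either $\varphi \subset \mathcal{M}_{\bg}$, which is the first alternative; or $\varphi$ approaches $\mathcal{M}_{\bg}$ from outside, in which case the lemma forces the limit point to be a critical point of $\syst$ in $\mathcal{M}_{\bg}$ and the approach to be transverse. Running over all relevant $\bg$ gives the statement.

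The main subtlety is ensuring that the modification of the Weil--Petersson metric (via the Bárta--Chill--Fa\v{s}angová scheme used to globalize the gradient system near the boundary) does not disturb the tangency of the gradient to the strata. Because that modification only alters the metric on the kernel of $d\syst$, and in particular on the twist directions that are normal to the strata up to first order in Masur's expansion, the stratumwise gradient is left unchanged and the Morse submanifold structure persists. Once this compatibility is recorded, the corollary is just the specialization of the preceding lemma to the stratification of $\mgnb$.
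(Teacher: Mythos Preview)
Your proposal is correct and matches the paper's approach: the corollary is stated immediately after the lemma with no separate proof, so the intended argument is exactly the specialization you describe, applying the lemma with $N$ a boundary stratum of $\mgnb$. Your added justification that each stratum is a Morse submanifold (via Theorem~\ref{Morse property}(3) and the product structure of the stratumwise metric) and that the metric modification preserves the stratumwise gradient is appropriate elaboration of what the paper leaves implicit.
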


\section{Low degree homology}
\label{sec:low-degree-homology}

The Deligne--Mumford compactification $\mgnb$ is a compact orbifold. The homology of such a space can be related to the homology of any finite manifold cover via the action of the deck transformation group, cf. \cite{hatcher_algebraic_2001}.

\begin{lemma}
Let $\pi\colon \widetilde{M}\to M$ be a finite manifold cover with deck group $G$ acting on $\widetilde{M}$. Then
\[
H_*(M;\mathbb Q) \cong H_*(\widetilde{M};\mathbb Q)^G,
\]
where the superscript denotes the $G$-invariant part of homology.
\end{lemma}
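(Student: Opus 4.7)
The plan is to exhibit an explicit inverse to $\pi_*$ on the $G$-invariant subgroup using a transfer (umkehr) map, which is the classical route for finite covers and extends cleanly to the orbifold setting because $\widetilde M$ is assumed to be a manifold and $M$ is the quotient by a finite group action.

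First, I would construct the transfer homomorphism $\tau\colon H_*(M;\mathbb Q)\to H_*(\widetilde M;\mathbb Q)$. At the chain level, a singular simplex $\sigma\colon \Delta^k\to M$ admits exactly $d=|G|$ lifts $\widetilde\sigma_1,\dots,\widetilde\sigma_d$ to $\widetilde M$ (after passing to a sufficiently fine subdivision so that each simplex lies in an evenly covered chart, which is possible since $\pi$ is a finite cover), and one sets $\tau([\sigma])=\sum_{i=1}^d[\widetilde\sigma_i]$. Standard arguments show this is a well-defined chain map independent of the choice of subdivision.

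Next, I would verify the two defining identities of the transfer: (i) $\pi_*\circ\tau=d\cdot\mathrm{id}$ on $H_*(M;\mathbb Q)$, since projecting each of the $d$ lifts recovers $\sigma$ itself $d$ times; and (ii) $\tau\circ\pi_*=\sum_{g\in G}g_*$ on $H_*(\widetilde M;\mathbb Q)$, since the preimages of $\pi\circ\widetilde\sigma$ are exactly the $G$-translates of $\widetilde\sigma$. Identity (ii) immediately gives $g_*\circ\tau=\tau$ for every $g\in G$, so the image of $\tau$ lies in $H_*(\widetilde M;\mathbb Q)^G$.

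With these identities in hand, the rest is formal: viewing $\pi_*$ as a map $H_*(\widetilde M;\mathbb Q)^G\to H_*(M;\mathbb Q)$, one has $\pi_*\circ\tfrac1d\tau=\mathrm{id}$ from (i), and for $x\in H_*(\widetilde M;\mathbb Q)^G$, identity (ii) gives $\tfrac1d\tau\circ\pi_*(x)=\tfrac1d\sum_{g\in G}g_*(x)=x$. Invertibility of $d$ in $\mathbb Q$ is used in both places. Thus $\pi_*$ restricted to the invariants is an isomorphism with inverse $\tfrac1d\tau$.

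The main obstacle is the orbifold subtlety: $M$ here is the quotient of a manifold by a finite group action (the Deligne--Mumford compactification as a global quotient in the relevant charts), so one must make sure that ``$H_*(M;\mathbb Q)$'' is interpreted as singular rational homology of the underlying topological space, and that $\pi$ is an honest topological covering only away from isotropy loci. Since the action of $G$ on $\widetilde M$ is properly discontinuous with finite stabilizers and since these stabilizers are invertible in $\mathbb Q$, the transfer construction above can be carried out on barycentrically subdivided chains that avoid fixed loci, or equivalently one invokes the standard result that for a finite group $G$ acting on a space $\widetilde M$, $H_*(\widetilde M/G;\mathbb Q)\cong H_*(\widetilde M;\mathbb Q)^G$, which is exactly the statement we need.
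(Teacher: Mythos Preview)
The paper does not actually supply a proof of this lemma; it merely states it and cites Hatcher. Your transfer-map argument is the standard proof (essentially Hatcher, Proposition~3G.1) and is correct, including the remark that for the intended application $\pi$ is a quotient by a finite group action that need not be free, where the same conclusion holds over $\mathbb Q$.
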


\begin{theorem}
\label{isomorphism of low degree homology}
    For any $k>0$, when $g+n$ is sufficiently large, in particular when $\ind(g,n)>k+1$, the inclusion
    \[
    i\colon \partial\mgnb\to\mgnb
    \]
    induces an isomorphism
    \[
    i_*\colon H_k(\partial \mgnb;\mathbb Q) \xrightarrow{\cong} H_k(\mgnb;\mathbb Q).
    \]
\end{theorem}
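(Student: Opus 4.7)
The plan is to prove the vanishing of the relative homology $H_{\ast}(\mgnb,\partial\mgnb;\mathbb Q)$ in degrees $\le k+1$ via relative Morse theory for $\syst$, and then conclude via the long exact sequence of the pair. Concretely, from
\[
H_{k+1}(\mgnb,\partial\mgnb;\mathbb Q)\to H_k(\partial\mgnb;\mathbb Q)\xrightarrow{i_*} H_k(\mgnb;\mathbb Q)\to H_k(\mgnb,\partial\mgnb;\mathbb Q),
\]
it suffices to show that both outer terms vanish. Since $\mgnb$ is an orbifold, I would first pass to a finite manifold cover $\widetilde{\mgnb}\to\mgnb$ as in the preceding lemma, work with the pulled-back $\syst$ there, and then take $G$-invariants at the end, which is harmless with $\mathbb Q$-coefficients.

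The heart of the argument is a Morse-theoretic calculation of the relative chain complex. I would use the gradient vector field of $\syst$ with respect to the stratumwise Weil--Petersson metric from Section~\ref{sec:wp-metric} (extended globally by the B\'arta--Chill--Fa\v{s}angov\'a/B\'il\'y theorem), which is tangent to each stratum. By the fourth item of Theorem~\ref{Morse property}, the unstable manifold of any critical point $X\in\mathcal M_\bg$ is stratifically stable, so for $\bg$ non-trivial the entire unstable manifold of $X$ sits inside $\overline{\mathcal M_\bg}\subset\partial\mgnb$. Consequently, the sublevel filtration $\syst^{-1}(-\infty,c]$ pairs with $\partial\mgnb$ in such a way that crossing a critical value corresponding to a boundary critical point only modifies the pair by cell attachments that lie entirely in $\partial\mgnb$, contributing trivially to the relative chain complex. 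Only interior critical points $X\in\mgn$ give generators of $C_\ast(\mgnb,\partial\mgnb;\mathbb Q)$, each in degree equal to $\ind(X)$. Invoking the index gap (Theorem~\ref{index gap theorem}, or simply the hypothesis $\ind(g,n)>k+1$), every interior critical point has Morse index strictly greater than $k+1$, so $C_i(\mgnb,\partial\mgnb;\mathbb Q)=0$ for $i\le k+1$, whence $H_i(\mgnb,\partial\mgnb;\mathbb Q)=0$ in the same range.

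The main obstacle I foresee is rigorously justifying the previous paragraph's handle decomposition in this singular setting: the Weil--Petersson metric is incomplete and the ambient space is an orbifold stratified by $\partial\mgnb$, so the standard Morse-theoretic statement ``passing a critical value attaches a cell of dimension equal to the index'' has to be adapted. Corollary~\ref{transverse to submanifold} is the key technical input here: because every flow line either stays in its stratum or hits a lower-stratum critical point transversely, the descending disks of interior critical points meet $\partial\mgnb$ transversely (or not at all), and the attaching spheres for the Morse handles can be arranged to lie in the interior for the relative construction, while the boundary critical points build up $\partial\mgnb$ internally via their own handle decomposition. Once these two sub-handle decompositions are shown to be compatible, so that $(\mgnb,\partial\mgnb)$ is filtered by pairs that change only by cells associated with interior critical points, the degree count from the index gap completes the argument, and the long exact sequence yields the isomorphism $i_\ast$.
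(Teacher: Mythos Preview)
Your approach is correct and reaches the same conclusion, but the paper takes a somewhat different, more direct route. Rather than building a relative Morse CW structure on the pair $(\mgnb,\partial\mgnb)$, the paper exploits two concrete estimates about $\syst$ near the boundary: the uniform bound $|\syst-\sys|<CT$ and the positivity $\langle\nabla^{\text{WP}}\syst,\nabla l_i\rangle>0$ along pinching directions. From the first, the entire boundary $B$ is contained in a low sublevel set $\syst^{-1}([0,\delta])$ (since $\sys\equiv 0$ on $\partial\mgnb$); from the second, the $\sys$-level sets are transverse to $\nabla^{\text{WP}}\syst$, so $\syst^{-1}([0,\delta])\cong B\times[0,\delta]$ is an honest collar. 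Standard Morse theory then builds $M$ from this collar by attaching handles for the remaining critical points, all of which are interior and hence of index $\ge\ind(g,n)>k+1$, giving $H_k(B)\cong H_k(M)$ directly.

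The trade-off is this: the paper's collar argument avoids precisely the obstacle you flag---there is no need to organise the boundary critical points into a subcomplex of a stratified Morse CW structure, because all of $\partial\mgnb$ is already swallowed by a single sublevel set that visibly retracts onto it. Your approach instead invokes stratific stability of unstable manifolds and Corollary~\ref{transverse to submanifold} to argue that $\partial\mgnb$ is a Morse subcomplex; this is more conceptual and would generalise to settings where no such collar is available, but it requires the delicate relative handle analysis you describe. Both arguments ultimately rest on the same index gap, and yours goes through once the subcomplex claim is made rigorous; the paper's route is simply shorter because the two geometric estimates let it bypass that step entirely.
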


The proof uses the following two estimates from \cite{chen2023morse}:
\begin{itemize}
    \item It is uniform on $\mgnb$ that
    \[
    |\syst-\sys|<CT.
    \]
    \item Near a boundary stratum $D_S$, where $S=\{\gamma_i\}$ is a set of mutually disjoint simple closed geodesics, we have
    \[
    \inp{\nabla^{\text{WP}}\syst}{\nabla l_i}>0.
    \]
\end{itemize}

\begin{proof}
Let $M$ be a finite manifold cover of $\mgnb$, and let $B \subset M$ denote the preimage of $\partial\mgnb$. By abuse of notation, we denote the lift of the $\syst$ function to $M$ also by $\syst$.  

Since $|\syst - \sys| < C T$, for $\delta>0$ sufficiently small we have
\[
\syst^{-1}([0,\delta]) \subset M^{\le\epsilon},
\]
where $M^{\le\epsilon}$ is the preimage of the $\epsilon$-thin part $\mgnb^{\le\epsilon} = \sys^{-1}([0,\epsilon])$. In particular,
\[
\crit(\syst) \cap \syst^{-1}([0,\delta)) \subset B.
\]

By Morse handle decomposition \cite{morse1959topologically,milnor2016morse}, $M$ is obtained from $\syst^{-1}([0,\delta])$ by attaching handles of index $m = \ind(g,n), \dots, \dim M$.  

Since the level sets $\sys^{-1}(c)$ are transverse to $\nabla^\text{WP}\syst$ for small $c>0$, we have homeomorphisms
\[
B\times [0,\delta] = \sys^{-1}(0)\times [0,\delta] \cong M^{\le\epsilon} \cong \syst^{-1}([0,\delta]).
\]

It follows that there is an isomorphism on homology in low degrees:
\[
i_* \colon H_k(B) \xrightarrow{\cong} H_k(M),
\]
and hence, passing to the quotient by the deck group,
\[
i_* \colon H_k(\partial\mgnb;\mathbb Q) \xrightarrow{\cong} H_k(\mgnb;\mathbb Q).
\]
\end{proof}

Let $H^\text{BM}_*$ denote the Borel--Moore homology. Taking the long exact sequence of the pair $(\mgnb, \partial\mgnb)$, we deduce:

\begin{theorem}
Under the same assumptions, we have
\[
H^{6g-6+2n-k}(\mgn;\mathbb Q) \cong H^\text{BM}_k(\mgn;\mathbb Q) \cong H_k(\mgnb, \partial\mgnb;\mathbb Q) = 0.
\]
% \[
% H^\text{BM}_k(\mgn;\mathbb Q)\cong H^{6g-6+2n-k}(\mgn;\mathbb Q) \cong H^c_k(\mgn;\mathbb Q) \cong H_k(\mgnb, \partial\mgn;\mathbb Q) = 0.
% \]
\end{theorem}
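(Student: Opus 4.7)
The plan is to decompose the displayed claim into three separate assertions and treat each in turn: (a) the Poincaré--Lefschetz duality $H^{6g-6+2n-k}(\mgn;\mathbb Q) \cong H^\text{BM}_k(\mgn;\mathbb Q)$, (b) the open/compactified identification $H^\text{BM}_k(\mgn;\mathbb Q) \cong H_k(\mgnb,\partial\mgnb;\mathbb Q)$, and (c) the vanishing $H_k(\mgnb,\partial\mgnb;\mathbb Q)=0$. Only (c) is new, and it is essentially an immediate consequence of the previous theorem; (a) and (b) are standard machinery, applied in the orbifold setting by passing to a finite manifold cover.

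I would start with (c), since this is where the previous theorem enters. The long exact sequence of the pair $(\mgnb,\partial\mgnb)$ reads
\[
\cdots \to H_k(\partial\mgnb;\mathbb Q) \xrightarrow{i_*} H_k(\mgnb;\mathbb Q) \to H_k(\mgnb,\partial\mgnb;\mathbb Q) \to H_{k-1}(\partial\mgnb;\mathbb Q) \xrightarrow{i_*} H_{k-1}(\mgnb;\mathbb Q) \to \cdots
\]
Under the assumption $\ind(g,n)>k+1$, Theorem~\ref{isomorphism of low degree homology} makes $i_*$ an isomorphism simultaneously in degree $k$ and in degree $k-1$, since the stable range condition is monotone in the degree. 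Exactness then forces $H_k(\mgnb,\partial\mgnb;\mathbb Q)=0$.

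For (a) and (b), I would pass to a finite manifold cover $\pi\colon M\to\mgnb$ with deck group $\Gamma$, exactly as in the lemma preceding Theorem~\ref{isomorphism of low degree homology}. The preimage $\pi^{-1}(\mgn)$ is a smooth oriented real manifold of dimension $6g-6+2n$, open inside the compact manifold $M$ with closed complement $\pi^{-1}(\partial\mgnb)$. On this cover, the identification
\[
H^\text{BM}_k\bigl(\pi^{-1}(\mgn);\mathbb Q\bigr) \cong H_k\bigl(M,\pi^{-1}(\partial\mgnb);\mathbb Q\bigr)
\]
is formal from the definition of Borel--Moore homology via one-point compactification, and Poincaré--Lefschetz duality gives
\[
H^{6g-6+2n-k}\bigl(\pi^{-1}(\mgn);\mathbb Q\bigr) \cong H^\text{BM}_k\bigl(\pi^{-1}(\mgn);\mathbb Q\bigr).
\]
Taking $\Gamma$-invariants, which is an exact operation on $\mathbb Q$-vector spaces, then transports both isomorphisms down to $\mgn$ and $\mgnb$.

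The main obstacle, to the extent there is any, is bookkeeping: one must verify that the deck-group action is compatible with both the Borel--Moore identification and the Poincaré--Lefschetz duality isomorphism. Both compatibilities are well established in the literature on orbifolds, so I would invoke them rather than rederive them. The essential geometric content of the theorem is entirely absorbed into the previous theorem, and the remainder of the argument is a short formal deduction.
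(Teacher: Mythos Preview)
Your proposal is correct and matches the paper's own argument: the paper simply writes ``Taking the long exact sequence of the pair $(\mgnb,\partial\mgnb)$, we deduce'' and states the theorem, leaving the Poincar\'e--Lefschetz and Borel--Moore identifications as standard. Your treatment of (c) is exactly this, and your additional remarks on (a) and (b) via a finite manifold cover merely spell out what the paper takes for granted.
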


\begin{remark}
This result is equivalent to Harer's theorem on the virtual cohomological dimension \cite{harer1986virtual} in the stable range, although here we only consider a weaker statement for sufficiently large $g+n$.
\end{remark}

\section{Stability Phenomena}
\label{sec:stability}

To analyze homology and stability phenomena more precisely, we introduce the following definition.

\begin{definition}
    Let $\sgn(k)$ denote the collection of Deligne--Mumford strata of $\mgnb$ containing $\syst$-critical points of index $k$. Define
    \[
    \sgn([k]) \subset \mgnb
    \]
    as the relative Deligne--Mumford compactification of the union $\bigcup_{j\le k} \sgn(j)$.
\end{definition}

By the index gap theorem (Theorem~\ref{index gap theorem}), for $k<\ind(g,n)$ we have
\[
\sgn([k]) \subset \partial\mgnb.
\]
Analogously to Theorem~\ref{isomorphism of low degree homology}, the low-degree homology of $\mgnb$ arises from the strata in $\sgn([k])$.

\begin{definition}
For a stable graph $\bg$, define its \emph{stratum dimension} $\dim \bg$ as the dimension of $\mathcal M_\bg$, and its \emph{local stratum dimension} $|\bg|$ as
\[
|\bg| = \max_{v \in V(G)} \dim_{\mathbb R} \mathcal M_{g(v),n(v)}.
\]
\end{definition}

\begin{lemma} 
For $k<\ind(g,n)$, there exist constants $v=v(k)>0$ and $h=h(k)>0$ such that for any $\mathcal M_\bg \in \sgn([k])$, one has
\begin{align*}
    &|\bg| \le h,\\
    &\#V(G) \ge v,\\
    &\#\{v\in V(G) : \dim \mathcal M_{g(v),n(v)}>0 \} \le \frac{1}{2} hk.
\end{align*}
\end{lemma}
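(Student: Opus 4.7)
The plan is to exploit the index additivity of Theorem~\ref{Morse property} together with the index gap of Theorem~\ref{index gap theorem} to constrain the combinatorics of any stable graph $\bg$ with $\mathcal M_\bg\in\sgn([k])$. The key reduction is to pass to a ``parent'' graph $\bg'$ (possibly $\bg$ itself): since $\sgn([k])$ is the closure in $\mgnb$ of $\bigcup_{j\le k}\sgn(j)$, the stratum $\mathcal M_\bg$ lies in some $\overline{\mathcal M_{\bg'}}$ for a $\bg'$ carrying a $\syst$-critical point $X$ with $\ind(X)\le k$, and $\bg$ is a further specialization of $\bg'$ obtained by pinching additional curves inside its components.

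For the parent $\bg'$, index additivity gives $\sum_{v\in V(G')}\ind(X_v)=\ind(X)\le k$. Any vertex $v$ with $\dim\mathcal M_{g(v),n(v)}>0$ carries a nontrivial critical factor, so $\ind(X_v)\ge\ind(g(v),n(v))\ge 1$, and the index gap forces $C\log\log(g(v)+n(v))<k$. This bounds $g(v)+n(v)$ and hence $\dim\mathcal M_{g(v),n(v)}$ purely in terms of $k$; one may take $h(k):=6\,e^{e^{k/C}}$. Moreover, the number of positive-dimensional vertices in $\bg'$ is at most $k$, since each contributes at least $1$ to the index sum. Since further pinching only decreases the dimension of each component, $|\bg|\le|\bg'|\le h$, and summing real dimensions gives $\dim_{\mathbb R}\mathcal M_\bg\le \dim_{\mathbb R}\mathcal M_{\bg'}\le hk$; since each positive-dimensional vertex contributes at least $2$ to the real dimension, one obtains $\#\{v:\dim\mathcal M_{g(v),n(v)}>0\}\le \tfrac12 hk$.

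For the lower bound $|V(G)|\ge v(k)$, I would use the standard stable-graph identities $\sum_v n(v)=n+2|E(G)|$ and $\sum_v g(v)=g-b_1(G)$ with $b_1(G)=|E(G)|-|V(G)|+1$: since every vertex has $g(v)+n(v)\le\max(h,3)$ by the previous bounds, the totals on the left are at most a constant multiple of $|V(G)|$, while the right-hand sides grow linearly with $g+n$. Combining these forces $|V(G)|$ to grow linearly in $g+n$, and the hypothesis $k<\ind(g,n)$ then yields a positive lower bound $v(k)$ depending only on $k$.

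The main obstacle lies in this last step: the index gap only lower-bounds $\ind(g,n)$, so $k<\ind(g,n)$ does not automatically force $g+n$ to be large. For exceptional $(g,n)$ with disproportionately large $\ind(g,n)$ one falls back on the trivial bound $|V(G)|\ge 1$, whereas in the stable range where $g+n$ is large, the accounting above shows $|V(G)|$ in fact grows with $g+n$. Apart from this subtlety, the argument is a direct assembly of the Morse-theoretic inputs already established.
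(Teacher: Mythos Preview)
Your argument is correct and is essentially the paper's: reduce to a parent $\bg'\in\sgn(j)$ with $j\le k$, apply index additivity and the index gap to bound each $g(v)+n(v)$ (hence $|\bg|$), and count vertices via the Euler-characteristic identity. Your stated obstacle dissolves once you note that the Morse index never exceeds the ambient dimension, so $k<\ind(g,n)\le \dim_{\mathbb R}\mgn=6g-6+2n$ already forces $2g-2+n>k/3$; feeding this into your accounting gives $\#V(G)$ bounded below by a positive function of $k$ alone (and in any case $\#V(G)\ge1$ suffices for the statement as written). Your total-dimension argument for the third inequality on boundary strata is in fact what produces the bound $\tfrac12 hk$ rather than $k$, and is cleaner than the paper's terse ``the same inequalities extend to boundary strata.''
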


\begin{proof}
If the first two inequalities hold for a stratum $\mathcal S$, they also hold for any boundary stratum.  

For any $X \in \mathcal M_\bg \in \sgn(k)$, we have
\[
\ind(g(v),n(v)) \le \ind(X_v) \le \ind(X) = k,
\]
so $\dim_{\mathbb R} \mathcal M_{g(v),n(v)}$ is bounded for each $v\in V(G)$. The lower bound on $\#V(G)$ follows from
\[
\#V(G) \ge \frac{6g-6+2n}{|\bg|}.
\]

By the classification of low-index critical points \cite{chen2023index}, we have
\[
\ind(g(v),n(v)) \ge 1 \quad \text{whenever } \dim \mathcal M_{g(v),n(v)}>0.
\]
Using the additivity of Morse indices over $V(G)$, it follows that
\[
\#\{v\in V(G) : \dim \mathcal M_{g(v),n(v)}>0 \} \le k.
\]
The same inequalities extend to boundary strata of $\mathcal M_\bg$.
\end{proof}

In other words, when $\syst$-Morse theory applies, the `complexity' of $k$-th rational homology elements stabilizes as either $g$ or $n$ increases.

\begin{corollary}
\label{finite generation morse}
For every $k \ge 0$, there exists a finite set of $\syst$-critical points
\[
S = \{X_i \in \mathcal M_{g_i,n_i}\}
\]
that generates all critical points of index $k$ in $\bigsqcup_{(g,n)} \mgnb$ via the attachment of copies of thrice-marked $\mathbb P^1$.
\end{corollary}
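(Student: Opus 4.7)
The plan is to combine the vertex-wise decomposition of $\syst$-critical points from Theorem~\ref{Morse property} with the structural constraints established in the preceding lemma. The key inputs are the bound $|\bg| \le h(k)$ on the local stratum dimension and the bound on the number of vertices with positive-dimensional moduli, which together force every index-$k$ critical point to look like a ``heavy core'' of bounded size together with a forest of trivalent genus-$0$ vertices.

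First, I would assemble the candidate finite set. Using $|\bg| \le h(k)$, the pairs $(g(v), n(v))$ that can label a vertex with positive-dimensional moduli come from the finite list
\[
L(k) = \{(g', n') : 2g' - 2 + n' > 0, \; 0 < \dim_{\mathbb R} \overline{\mathcal M}_{g', n'} \le h(k)\}.
\]
For each $(g', n') \in L(k)$, the compact orbifold $\overline{\mathcal M}_{g', n'}$ carries $\syst$ as a $C^2$-Morse function (Theorem~\ref{Morse property}), so it admits only finitely many critical points of index at most $k$. Taking $S$ to be the union of these finite sets over $L(k)$ yields the desired finite collection.

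Next, I would reconstruct arbitrary critical points from $S$. Given $X \in \mathcal M_\bg \subset \overline{\mathcal M}_{g,n}$ of index $k$, Theorem~\ref{Morse property} gives a decomposition $X = (X_v)_{v \in V(G)}$ with $\sum_v \ind(X_v) = k$. Partition $V(G) = V_h \sqcup V_\ell$ into heavy vertices (with positive-dimensional moduli) and light vertices (necessarily of type $(0,3)$, for which $X_v$ is the unique point of $\mathcal M_{0,3}$). Each heavy component $X_v$ lies in $S$ by construction, so $X$ is encoded by a finite sub-multiset of $S$, one element per vertex of $V_h$, together with the combinatorial data of how those heavy pieces and additional copies of $\mathcal M_{0,3}$ are glued along $\bg$.

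The main obstacle is the combinatorial step: checking that every such $\bg$ can be built from the induced subgraph on $V_h$ by iteratively inserting trivalent genus-$0$ vertices, corresponding at the level of moduli to attachments of $\mathcal M_{0,3}$ along half-edges and edges. I would proceed by induction on $|V_\ell|$: each light vertex has genus $0$ and valence $3$, and the subgraph spanned by $V_\ell$ together with its incidences to $V_h$ forms a disjoint union of trees rooted in $V_h$. Removing a leaf of this forest is the inverse of a single $\mathcal M_{0,3}$-attachment, and repeated pruning reduces $\bg$ to its heavy skeleton, completing the argument.
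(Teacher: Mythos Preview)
Your construction of the finite generating set $S$ is correct and matches the paper's implicit argument: the bound $|\bg|\le h(k)$ restricts the possible heavy vertex types to a finite list, and compactness of each $\overline{\mathcal M}_{g',n'}$ gives finitely many critical points there. The paper states the corollary without proof, regarding it as immediate from the preceding lemma together with the vertex-wise decomposition in Theorem~\ref{Morse property}, so your overall strategy is the intended one.

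There is, however, a genuine gap in your combinatorial step. The claim that the subgraph spanned by the light vertices $V_\ell$ (together with its incidences to $V_h$) is a forest is false. The first Betti number of the stable graph satisfies $b_1(G)+\sum_v g(v)=g$, so whenever the heavy vertices do not account for all of the genus, the graph has cycles passing through light vertices. In the extreme case $k=0$ every vertex is of type $(0,3)$, $V_h=\emptyset$, and the graph is a trivalent graph with $b_1=g$; for instance the theta graph or the dumbbell in $\overline{\mathcal M}_{2,0}$. Your leaf-pruning induction, which in effect only inverts $\text{Att}_1$-type attachments (one edge into the existing graph), cannot reach such configurations.

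The repair is to use the full family of attaching maps $\text{Att}_1,\text{Att}_2,\text{Att}_3$ defined just after the corollary: these insert a $(0,3)$ vertex joined to the existing graph by $1$, $2$, or $3$ edges, and hence can create cycles. The correct induction removes an arbitrary light vertex (not just a leaf), converting its incident edges back into marked points on the neighbours, and one must check that connectivity (and stability) is preserved, or else enlarge $S$ to absorb the finitely many small base cases where no such removal is possible. The paper's remark that ``by composing attaching maps, one can attach a stable graph of stratum dimension $0$'' is exactly this point, left as a combinatorial exercise.
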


Let $\textbf{P}$ denote the stable graph dual to $\mathcal M_{0,3}$. For a permutation $\sigma \in S_n$, define $\sigma$-attaching maps $\text{Att}_1(\sigma), \text{Att}_2(\sigma), \text{Att}_3(\sigma)$ on the set of stable graphs $\bg$ as follows. Each cycle of $\sigma$ is assigned to a single vertex of $\bg$ via the marking function, and for $i=1,2,3$, $\text{Att}_i(\sigma)(\bg)$ is obtained by connecting the half-edges of $\textbf{P}$ to $\bg$ so that
\[
m_{\text{Att}_i(\sigma)(\bg)}(n') = m_\bg(\sigma(n')), \quad n' = 1,\dots,n-i,
\]
and
\[
m_{\text{Att}_i(\sigma)(\bg)}(n-i+j) = m_\textbf{P}(1+j), \quad j=1,\dots,i.
\]

In this framework, Part III of Theorem~\ref{Morse property} implies:

\begin{lemma}
\label{in the image}
    For any $k$, except for finitely many pairs $(g,n)$, for any $\xi\in H_k(\mgnb;\mathbb Q)$, there exists $\sigma\in S_n$, such that
    \[
    \xi\in\text{im}((\text{Att}_i(\sigma)_*:H_k(\mgnb;\mathbb Q)\to H_k(\overline{\mathcal M}_{g+i-1,n+3-2i};\mathbb Q)).
    \]
\end{lemma}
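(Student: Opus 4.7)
The strategy is to combine the Morse handle decomposition induced by $\syst$ with the combinatorial constraints on low-index critical strata established in the preceding lemma. The reduction to a small ``local'' statement about individual Morse cells is the heart of the argument; the rest is combinatorial bookkeeping.

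First, I would argue that any class $\xi\in H_k(\mgnb;\mathbb Q)$ is represented by a Morse $k$-chain supported on the strata in $\sgn([k])$. By stratific stability of the descending manifolds (Theorem~\ref{Morse property}), each index-$k$ Morse cell lies entirely within a single stratum $\mathcal M_\bg$ whose stable graph $\bg$ supports a $\syst$-critical point of index exactly $k$. It thus suffices to show that each such Morse cell lies in the image of some $\text{Att}_i(\sigma)_*$.

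Second, I would invoke the preceding lemma: any such $\bg$ has $\#V(G)\ge v(k)$, while the number of its vertices carrying positive-dimensional moduli is bounded purely in terms of $k$. Since that upper bound does not depend on $(g,n)$ while $\#V(G)$ is forced to grow with $g+n$ once $|\bg|$ is capped, for all but finitely many pairs $(g,n)$ every relevant $\bg$ contains at least one vertex $v^*$ with $\mathcal M_{g(v^*),n(v^*)}=\mathcal M_{0,3}$, the unique zero-dimensional stable moduli space. Third, I would read off the attaching data directly from $(\bg,v^*)$: let $i\in\{1,2,3\}$ be the number of edges incident to $v^*$, let $\bg'$ be the stable graph obtained from $\bg$ by deleting $v^*$ and converting each of its incident edges into a new marking on the opposite endpoint, and choose $\sigma\in S_n$ so that the marking labels of $\bg$ match the ordering prescribed by $\text{Att}_i(\sigma)$. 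Then $\bg=\text{Att}_i(\sigma)(\bg')$ combinatorially, and $\mathcal M_\bg$ lies in the image of $\text{Att}_i(\sigma)$ on the moduli side. By Part III of Theorem~\ref{Morse property}, a $\syst$-critical point of index $k$ in $\mathcal M_\bg$ corresponds to a critical point of the same index in $\mathcal M_{\bg'}$ (the $\mathcal M_{0,3}$ factor being a point of index $0$), so the descending manifold carrying $\xi$ is the image under the attaching of a descending manifold in the smaller moduli, placing $\xi$ in the image of $\text{Att}_i(\sigma)_*$.

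The main obstacle is uniformity of the choice of $(\sigma,i)$ when the Morse representative of $\xi$ is supported on several strata at once, since different strata a priori dictate different attaching data. I would address this by exploiting the freedom afforded by $\sigma\in S_n$: varying $\sigma$ sweeps out every possible labeling of the attached markings, so for any fixed valence $i$ the images $\bigcup_\sigma\mathrm{im}(\text{Att}_i(\sigma)_*)$ cover all strata containing an $\mathcal M_{0,3}$ vertex of that valence. A uniform $(\sigma,i)$ is then selected after possibly modifying the Morse chain representative of $\xi$ by $S_n$-equivariant homologies so that all strata in its support share a common attaching pattern; this step is where the flexibility of the rational coefficients and the symmetry of the marking set both enter in an essential way.
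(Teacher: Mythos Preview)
Your approach is exactly what the paper has in mind: the paper's entire justification for this lemma is the one-line remark ``In this framework, Part III of Theorem~\ref{Morse property} implies'', and your proposal is the natural unpacking of that implication using the preceding bounds on $|\bg|$ and on the number of positive-dimensional vertices to locate an $\mathcal M_{0,3}$ vertex to peel off.

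Your concern about uniformity of $(\sigma,i)$ across the several strata supporting a Morse representative is well-placed and is not addressed in the paper either; as literally stated (a single $\sigma$), the lemma is stronger than what your cell-by-cell argument yields, and your proposed fix via ``$S_n$-equivariant homologies'' is not obviously workable. For the application to Theorem~\ref{finite generation}, however, only the weaker conclusion that $\xi$ lies in the \emph{span} of the images of the various $\text{Att}_i(\sigma)_*$ is needed, and that is precisely what your argument establishes cleanly without the final step.
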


By composing attaching maps, one can attach a stable graph of stratum dimension $0$ (dual to a maximally noded curve) to stable graphs with marked points. Morse-theoretically, attaching copies of thrice-marked $\mathbb P^1$ preserves criticality and the Morse index, inducing equivariant homomorphisms on rational homology and Morse homology.

Combining Corollary~\ref{finite generation morse} and Lemma~\ref{in the image}, we obtain:

\begin{theorem}
\label{finite generation}
    For every $k\ge0$, there exists a finite set
    \[
    \Xi=\{\xi_i\in H_k(\overline{\mathcal M}_{g_i,n_i};\mathbb Q)\}
    \]
    that generates
    \[
    \bigoplus_{(g,n)} H_k(\mgnb;\mathbb Q)
    \]
    via attaching thrice-marked $\mathbb P^1$'s. Here, the stable range is given by $\ind(g,n)>k$.
\end{theorem}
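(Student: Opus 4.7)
The plan is to combine the Morse-theoretic finite generation of critical points (Corollary~\ref{finite generation morse}) with the surjectivity of attaching maps on homology (Lemma~\ref{in the image}). The former caps the complexity of the positive-dimensional part of any stable graph carrying an index-$k$ critical point, while the latter lets me express any homology class in a sufficiently large moduli space as the push-forward of a class on a strictly smaller one. The theorem then follows by descent on the complexity of $(g,n)$ toward a finite collection of base cases.

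First I would fix $k\ge 0$ and apply Corollary~\ref{finite generation morse} to extract a finite list of base pairs $(g_1,n_1),\dots,(g_M,n_M)$ such that every index-$k$ $\syst$-critical point in any $\mgnb$ with $\ind(g,n)>k$ is obtained from some critical point in one of the $\overline{\mathcal M}_{g_j,n_j}$ by iterated attaching of thrice-marked $\mathbb{P}^1$'s. Since each $\overline{\mathcal M}_{g_j,n_j}$ is a compact orbifold, each $H_k(\overline{\mathcal M}_{g_j,n_j};\mathbb{Q})$ is finite-dimensional, so I take $\Xi$ to be the union over $j$ of a basis for $H_k(\overline{\mathcal M}_{g_j,n_j};\mathbb{Q})$.

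Next, to show that $\Xi$ generates under attaching, I would argue by descent. Given $\xi\in H_k(\mgnb;\mathbb{Q})$ with $\ind(g,n)>k$, Lemma~\ref{in the image} supplies a permutation $\sigma$, an index $i\in\{1,2,3\}$, and a class $\xi'$ on a strictly smaller moduli space such that $\xi=(\text{Att}_i(\sigma))_*(\xi')$. Iterating this reduction strictly decreases $3g-3+n$ and therefore terminates after finitely many steps at a pair where Lemma~\ref{in the image} no longer applies. By the combinatorial bounds preceding Corollary~\ref{finite generation morse}, the set of such terminal pairs is contained in $\{(g_j,n_j)\}$, so the terminal $\xi'$ is a $\mathbb{Q}$-linear combination of elements of $\Xi$, and pushing forward through the recorded attaching maps expresses $\xi$ as such a combination as well.

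The main obstacle is the bookkeeping that links the two sides: I must verify that the base pairs at which the homological descent terminates coincide with the list produced by Corollary~\ref{finite generation morse}. Concretely, during iterated reduction, only trivial $\mathcal M_{0,3}$-vertices should be ``undone'' by each attaching map, so the ``essential subgraph'' of positive-dimensional vertices of a stable graph carrying an index-$k$ critical point is preserved. The additivity of Morse indices under products of strata in Theorem~\ref{Morse property}(III), together with the classification of low-index critical points that underlies Corollary~\ref{finite generation morse}, is what guarantees that the termination pairs are uniformly bounded and thus agree with the finite base list. Once this matching is set up, the rest is purely formal.
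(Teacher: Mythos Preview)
Your proposal is correct and follows the paper's approach: the paper's proof of Theorem~\ref{finite generation} is literally the sentence ``Combining Corollary~\ref{finite generation morse} and Lemma~\ref{in the image}, we obtain,'' and you have spelled out exactly what that combination means, namely descent on $3g-3+n$ via Lemma~\ref{in the image} down to a finite base set. One small remark: the ``main obstacle'' you flag, matching the terminal pairs of the descent with the list from Corollary~\ref{finite generation morse}, is not a real difficulty, since Lemma~\ref{in the image} already produces its own finite exceptional set of pairs $(g,n)$, and you may simply take $\Xi$ to contain bases for those $H_k(\overline{\mathcal M}_{g,n};\mathbb Q)$ as well; no delicate comparison is needed.
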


\section{Relation to previous work}
\label{others work}

\subsection{Tosteson's stability theorem in $n$}

Tosteson \cite{tosteson2021stability} studies the asymptotic behavior of the homology of $\mgnb$ using $\mathbf{FS}^\text{op}$-modules.

Let $\mathbf{FS}$ be the category of finite sets and surjections. An $\mathbf{FS}^\text{op}$-module is a functor from $\mathbf{FS}^\text{op}$ to the category of vector spaces, denoted $n\mapsto V_n$, and it is \emph{finitely generated in degree $\le C$} if there is a finite list of classes $\{v_i\in V_{d_i}\}$ with $d_i\le C$, such that every $V_n$ is spanned by classes of the form $f^*v_i$.

\begin{theorem}[\cite{tosteson2021stability}]
    Let $g,i\in\mathbb N$. Then the $\mathbf{FS}^\text{op}$-module
    \[
    n\mapsto H_i(\mgnb;\mathbb Q)
    \]
    is a subquotient of an extension of $\mathbf{FS}^\text{op}$-modules that are finitely generated in degree $\le 8g^2i^2+29g^2i+16gi^2+21g^2+10gi-6g$.
\end{theorem}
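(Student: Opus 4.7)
The plan is to deduce Tosteson's theorem from Theorem~\ref{finite generation}, Theorem~\ref{isomorphism of low degree homology}, and the stratification of $\partial\mgnb$ by stable graphs. First I would identify the $\mathbf{FS}^{\text{op}}$-module structure on $n\mapsto H_i(\mgnb;\mathbb Q)$ with iterated applications of the $\text{Att}_1(\sigma)$ operation: an elementary surjection $\{1,\dots,n+1\}\twoheadrightarrow\{1,\dots,n\}$ collapsing two marked points is exactly the effect on cycles of attaching a thrice-marked $\mathbb P^1$ at a single marked point, and arbitrary surjections in $\mathbf{FS}$ factor into such elementary ones. Naturality of the pushforward on homology then realizes the $\mathbf{FS}^{\text{op}}$-action.

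Fix $g$ and $i$. By Theorem~\ref{index gap theorem}, there is a threshold $N_0=N_0(g,i)$ with $\ind(g,n)>i+1$ for all $n\ge N_0$, placing us simultaneously in the stable range of Theorem~\ref{isomorphism of low degree homology} and of Theorem~\ref{finite generation}. Combining the two, every class in $H_i(\mgnb;\mathbb Q)$ is a sum of pushforwards from stable-graph strata $\mathcal M_\mathbf{G}\subset\partial\mgnb$ supporting a $\syst$-critical point of index at most $i$. The key combinatorial input is the lemma preceding Corollary~\ref{finite generation morse}: the number of vertices of $\mathbf{G}$ with $\dim\mathcal M_{g(v),n(v)}>0$ is bounded by $\tfrac12 h(i)\,i$, and each such vertex has local complexity at most $h(i)$. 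Contracting every maximal subtree of $\overline{\mathcal M}_{0,3}$-vertices in $\mathbf{G}$ and forgetting the marked points on them yields a \emph{core} belonging to a finite list $\{\mathbf{C}_1,\dots,\mathbf{C}_r\}$ depending only on $(g,i)$.

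For each core $\mathbf{C}_j$, the strata whose contraction equals $\mathbf{C}_j$ assemble, as $n$ grows, into an $\mathbf{FS}^{\text{op}}$-submodule $W^j_\bullet\subset H_i(\overline{\mathcal M}_{g,\bullet};\mathbb Q)$ generated under $\text{Att}_1$ by the single class living at the minimal marking number $n_j$ for which $\mathbf{C}_j$ can be realized. Filtering $H_i(\mgnb;\mathbb Q)$ by boundary depth and running the spectral sequence of this stratification then expresses $n\mapsto H_i(\mgnb;\mathbb Q)$ as a subquotient of an extension of the $W^j_\bullet$, producing the desired finite generation in degree at most $\max_j n_j$.

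The main obstacle is matching Tosteson's explicit polynomial bound $8g^2i^2+29g^2i+16gi^2+21g^2+10gi-6g$. Recovering such a bound requires explicit control both of the constant $h(i)$ from the structural lemma and of the threshold $N_0(g,i)$, the latter depending on a sharp lower bound for $\ind(g,n)$. The current estimate $\ind(g,n)>C\log\log(g+n)$ is far too weak, so this Morse-theoretic approach will yield finite generation only in some much larger explicit degree. A secondary issue is that the unstable range $n<N_0(g,i)$ contributes finitely many non-generic classes which must be absorbed via an auxiliary finitely generated submodule supported in bounded degree.
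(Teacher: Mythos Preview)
This theorem is not proved in the paper at all: it is quoted from \cite{tosteson2021stability} as prior work, and the surrounding remark explicitly says that the paper's Theorem~\ref{finite generation} recovers only the qualitative statement (finite generation as an $\mathbf{FS}^{\text{op}}$-module) with a \emph{weaker} bound on the stable range than Tosteson's. So there is no ``paper's own proof'' to compare against, and the polynomial $8g^2i^2+29g^2i+16gi^2+21g^2+10gi-6g$ is genuinely inaccessible from the $\syst$-Morse theory developed here.

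Your outline is a reasonable sketch of why Theorem~\ref{finite generation} implies finite generation of $n\mapsto H_i(\mgnb;\mathbb Q)$ as an $\mathbf{FS}^{\text{op}}$-module, and your identification of $\text{Att}_1(\sigma)$ with the elementary surjection is correct. But you have already identified the fatal gap yourself: matching the explicit degree bound requires effective control of $\ind(g,n)$ far beyond $C\log\log(g+n)$, and Remark~\ref{limitation} in the paper concedes this directly. Your proposal therefore proves a strictly weaker statement than the one quoted, namely finite generation in some unspecified degree depending on $(g,i)$, which is precisely what the paper claims for itself. If the task is to reproduce the theorem \emph{as stated}, the approach fails; if the task is to reproduce what the paper actually establishes, then your sketch is in line with the paper's one-sentence justification (the remark invoking Whitehead/WDVV moves), though more elaborate than what the paper writes.
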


Furthermore, Tosteson considers the functor $\text{Res}_r$ that restricts an $\mathbf{FS}^\text{op}$-module to an $\mathbf{FS}_r^\text{op}$-module, for $r\in\mathbb N$, where $\mathbf{FS}_r^\text{op}$ is the full subcategory of $\mathbf{FS}^\text{op}$ spanned by sets of size $\le r$. The functor $\text{Res}_r$ has a left adjoint $\text{Ind}_r$, which takes an $\mathbf{FS}_r^\text{op}$ module to the $\mathbf{FS}^\text{op}$ module freely generated by it modulo relations in degree $r$.

\begin{theorem}[\cite{tosteson2021stability}]
    Let $i, g\in\mathbb N$. There exists $N\in\mathbb N$ such that the natural map of $\mathbf{FS}^\text{op}$-modules
    \[
    \text{Ind}_N \text{Res}_N H_i(\overline{\mathcal M}_{g,-}, \mathbb Q) \to H_i(\overline{\mathcal M}_{g,-}, \mathbb Q)
    \]
    is an isomorphism. In particular, any presentation of the $\mathbf{FS}_N^\text{op}$-module $\text{Res}_N H_i(\overline{\mathcal M}_{g,-}, \mathbb Q)$ gives a presentation of the $\mathbf{FS}^\text{op}$-module $H_i(\overline{\mathcal M}_{g,-}, \mathbb Q)$.
\end{theorem}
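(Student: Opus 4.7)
The plan is to deduce this theorem as a consequence of Theorem~\ref{finite generation} by specializing to fixed $g$ and identifying the attaching operation $\text{Att}_1$ with the $\mathbf{FS}^\text{op}$-module structure on $H_*(\overline{\mathcal{M}}_{g,-};\mathbb{Q})$.

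The first step is to identify the $\mathbf{FS}^\text{op}$-structure with iterated $\text{Att}_1$'s. Every surjection of finite sets factors as a composition of elementary surjections that double a single element, and each such elementary surjection corresponds, up to a permutation $\sigma$, to attaching a thrice-marked $\mathbb{P}^1$ at the chosen marked point, which is precisely $\text{Att}_1(\sigma)$. Hence, for fixed $g$, the full $\mathbf{FS}^\text{op}$-module structure on $H_i(\mgnb;\mathbb{Q})$ is generated by the $\text{Att}_1$ maps.

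Next, I apply Theorem~\ref{finite generation} with $k=i$ to extract a finite seed set $\Xi=\{\xi_j\in H_i(\overline{\mathcal M}_{g_j,n_j};\mathbb Q)\}$. Retaining only the $\xi_j$ with $g_j\le g$ and pre-composing each with a fixed sequence of $\text{Att}_2,\text{Att}_3$ operations that reaches genus $g$ (using at most $g-g_j$ such attachments, with possibly a few $\text{Att}_1$'s inserted beforehand to preserve stability) produces a finite family of classes in $H_i(\overline{\mathcal{M}}_{g,m};\mathbb{Q})$ with $m$ bounded by some $M_0$ depending only on $\Xi$ and $g$. By the identification in the previous step, these classes generate $H_i(\overline{\mathcal{M}}_{g,-};\mathbb{Q})$ as an $\mathbf{FS}^\text{op}$-module.

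To upgrade finite generation to the counit isomorphism $\text{Ind}_N\text{Res}_N\to\text{id}$, I must also bound the degree of relations. Applying Theorem~\ref{finite generation} at degree $i+1$ gives a uniform bound $M_1$ on the generating degree of the Morse $(i+1)$-chains. The Morse differential commutes with $\text{Att}_1$: attaching $\mathcal{M}_{0,3}$ contributes only a point factor to the stratum's product structure, and by Part~III of Theorem~\ref{Morse property} together with Corollary~\ref{transverse to submanifold}, the gradient flow lines in the enlarged surface project to gradient flow lines on the seed factor with matching indices and transverse endpoints. Consequently, setting $N=\max(M_0,M_1)$, the truncation $\text{Res}_N$ of the Morse chain complex regenerates the full complex under $\text{Ind}_N$, and passing to homology yields the desired isomorphism of $\mathbf{FS}^\text{op}$-modules.

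The hard part will be the chain-level compatibility in the third paragraph: verifying that attaching thrice-marked $\mathbb{P}^1$'s defines an $\mathbf{FS}^\text{op}$-equivariant chain map on the Morse complex, including a careful sign and multiplicity count. Subtleties arise when $\bg$ admits nontrivial automorphisms permuting attached $\mathbb{P}^1$'s, so the argument is cleanest carried out $S_n$-equivariantly from the outset, with $\text{Aut}(\bg)$ tracked throughout. Once this compatibility and the exactness of $\text{Ind}_N$ on complexes of bounded-degree generators are established, Tosteson's isomorphism follows formally from the Morse-theoretic finite generation applied simultaneously in degrees $i$ and $i+1$.
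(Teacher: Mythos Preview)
The paper does not prove this theorem. It is quoted verbatim from \cite{tosteson2021stability} in Section~\ref{others work} as background, and the only thing the paper contributes is the brief Remark following the two Tosteson theorems: ``Our Theorem~\ref{finite generation} recovers Tosteson's stability theorems in $n$, noting that a Whitehead move (or WDVV exchange) on a stable graph preserves homology up to a nonzero factor.'' No argument beyond that one sentence is given, so there is no ``paper's own proof'' to compare against.

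Your proposal is therefore not a reconstruction of an omitted proof but an attempt to flesh out that one-line remark into a derivation. A few comments on that attempt. First, the ingredient the paper singles out---the Whitehead/WDVV relation---is exactly what is needed to show that the various ways of attaching a tree of $\mathbb P^1$'s all land in the same $\mathbf{FS}^\text{op}$-orbit of homology classes; you do not mention it, and without it your reduction of $\text{Att}_2,\text{Att}_3$ compositions to the $\text{Att}_1$-only $\mathbf{FS}^\text{op}$ structure is incomplete. Second, your third paragraph conflates two different things: finite generation of $H_{i+1}$ as an $\mathbf{FS}^\text{op}$-module is not the same as bounding the degree of relations in a presentation of $H_i$. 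To get the counit $\text{Ind}_N\text{Res}_N\to\text{id}$ to be an isomorphism you need the \emph{relations} among generators of $H_i$ to live in bounded degree, and controlling Morse $(i+1)$-chains (which compute $H_{i+1}$, not the syzygies of $H_i$) does not directly give this. The honest route is to show that the entire Morse chain complex, as a chain complex of $\mathbf{FS}^\text{op}$-modules, is degreewise finitely generated and then invoke a general result that homology of such a complex is finitely presented; your sketch gestures at this but does not establish the needed exactness properties of $\text{Ind}_N$.

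In short: the statement is not proved in the paper, the paper's one-sentence remark relies on the Whitehead move which you omit, and your upgrade from finite generation to the $\text{Ind}_N\text{Res}_N$ isomorphism has a genuine gap in the relations step.
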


\begin{remark}
    Our Theorem~\ref{finite generation} recovers Tosteson's stability theorems in $n$, noting that a Whitehead move (or WDVV exchange), on a stable graph preserves homology up to a nonzero factor. However, our bound on the stable range is weaker than Tosteson's, in the case of fixed $g$, c.f. Remark \ref{limitation}.
\end{remark}

\subsection{Tautological extension}

A \emph{tautological morphism} is a morphism induced by gluing, forgetting, and permuting marked points. The \emph{tautological ring} $RH^*(\mgnb)\subset H^*(\mgnb)$ is the smallest subring that is closed under all tautological morphisms.

Canning, Larson, and Payne \cite{canning2024extensions} define a \emph{semi-tautological extension (STE)} as a collection of subrings $S^*(\mgnb)$ of $H^*(\mgnb)$ that contains the tautological subrings $RH^*(\mgnb)$ and is closed under pullback by forgetting and permuting marked points and under pushforward for gluing marked points. They prove
\begin{theorem}[\cite{canning2024extensions}]
    For any fixed degree $k$, the STE generated by
    \[
    \{H^{k'}(\overline{\mathcal M}_{g',n'}):k'\le k, g'<\frac{3}{2}k'+1,n'\le k', 4g'-4+n'\ge k\}
    \]
    contains $H^k(\mgnb)$ for all $g$ and $n$.
\end{theorem}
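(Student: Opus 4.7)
The strategy is to combine Corollary~\ref{finite generation morse} with Lemma~\ref{in the image}, upgrading ``generation of critical points via attaching'' into ``generation of homology classes via attaching.'' The first step is to set up Morse homology for $\syst$ on $\mgnb$: passing to a finite manifold cover $\pi\colon M\to\mgnb$ and then taking deck-invariants (as in the proof of Theorem~\ref{isomorphism of low degree homology}), and using the stratumwise Weil--Petersson metric modified in Section~\ref{sec:wp-metric}, I would invoke Theorem~\ref{Morse property} together with the index gap theorem (Theorem~\ref{index gap theorem}) to conclude that in the stable range $\ind(g,n)>k$ no critical points of index $<k$ occur interior to $\mgn$. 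Consequently every class in $H_k(\mgnb;\mathbb Q)$ is a $\mathbb Q$-linear combination of unstable-manifold cycles of index-$k$ $\syst$-critical points of $\mgn$, together with classes pushed forward from $\partial\mgnb$.

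Next, I would let $\Xi=\{[X_i]\}$ be the rational homology classes associated to the finite set $S=\{X_i\in\mathcal M_{g_i,n_i}\}$ of index-$k$ critical points produced by Corollary~\ref{finite generation morse}. The third bullet of Theorem~\ref{Morse property} implies that attaching a copy of $\mathcal M_{0,3}$ (zero-dimensional, index $0$) to a critical point yields a critical point of the same Morse index, and the associated boundary-stratum inclusion carries the unstable manifold of $X$ to the unstable manifold of its attached partner. Hence the pushforward by an attaching map sends $[X]$ to the Morse homology class of the attached critical point, and the iterated attaching-span of $\Xi$ contains every index-$k$ critical class in every $\mgnb$.

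Finally, Lemma~\ref{in the image} closes the argument: apart from finitely many pairs $(g,n)$, any $\xi\in H_k(\mgnb;\mathbb Q)$ lies in the image of some $\text{Att}_i(\sigma)_*$, so induction on $g+n$ reduces $\xi$ to a linear combination of Morse classes in a $\mgnb$ of smaller complexity, and eventually to elements of $\Xi$ after enlarging $\Xi$ by the (finite) homology of the finitely many exceptional $(g,n)$ outside the stable range. The main obstacle will be the compatibility claim in the middle step: one must verify that the stratum pushforward on $H_*$ literally implements the combinatorial attaching operation on the Morse side, so that the generating relation among critical points really does translate to a generating relation on homology. This rests crucially on the transversality of $\syst$-gradient flow lines to boundary strata (Corollary~\ref{transverse to submanifold}), which rules out ``leakage'' of Morse cycles into critical points unaccounted for by $S$, and on careful bookkeeping of the orbifold-to-manifold lifts before and after attaching.
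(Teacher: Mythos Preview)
Your proposal is aimed at the wrong theorem. The statement you were asked to address is the result of Canning, Larson, and Payne, which the paper merely \emph{quotes} from \cite{canning2024extensions} in Section~\ref{others work}; the paper does not prove it, so there is no ``paper's own proof'' to compare against. What you have actually sketched is an argument for Theorem~\ref{finite generation} (finite generation of $H_k(\mgnb;\mathbb Q)$ via attaching thrice-marked $\mathbb P^1$'s), and indeed your outline---combine Corollary~\ref{finite generation morse} with Lemma~\ref{in the image} and induct on $g+n$---is essentially the paper's own one-line derivation of Theorem~\ref{finite generation}.

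But that does not establish the Canning--Larson--Payne statement. Their theorem is about \emph{cohomology} and \emph{semi-tautological extensions}, with explicit polynomial bounds $g'<\tfrac{3}{2}k'+1$, $n'\le k'$, $4g'-4+n'\ge k$ on the generating pieces; your argument produces a finite generating set for homology under attaching maps only, with no control on the $(g_i,n_i)$ beyond whatever comes out of the index gap (which, per Remark~\ref{limitation}, is only $\log\log(g+n)$ and cannot recover their bounds). The paper itself makes this distinction explicit in the last paragraph of Section~\ref{others work}: Theorem~\ref{finite generation} is the analogue in the ``type GP'' setting, not a proof of the cited theorem. So either you meant to address Theorem~\ref{finite generation}, in which case your proposal matches the paper, or you need an entirely different argument (the one in \cite{canning2024extensions}) for the stated result.
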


In our setting, we only consider tautological morphisms of certain types, namely gluing (G) and permuting (P) marked points; forgetful maps are not included. Analogous to the perspective of Canning, Larson, and Payne, we may consider \emph{tautological extensions of type `GP'} in homology. Our Theorem \ref{finite generation} then says that a finitely generated tautological extension of type `GP' contains $H_k(\mgnb;\mathbb Q)$ for all $(g,n)$.

% The \emph{tautological subgroup} of $RH_*(\mgnb)\subset H_*^\text{BM}(\mgnb)$ is the smallest subspace that is closed under tautological morphisms. We consider

% Let the \emph{narrow tautological closure} of a set $\Xi$ be the smallest collection of subspaces that is closed under pushforward for the tautological morphisms induced by gluing and permuting marked points.

% \begin{theorem}
%     For any degree $k$, there exits a finite set
%     \[
%     \Xi=\{\xi_i\in H_k(\overline{\mathcal M}_{g_i,n_i};\mathbb Q)\}
%     \]
%     whose narrow tautological closure contains $H_k(\mgnb;\mathbb Q)$ for all $(g,n)$.
% \end{theorem}

\clearpage
\bibliographystyle{alpha}
\bibliography{main}

@article{wolpert2009extension,
  title={{Extension of the Weil-Petersson connection}},
  author={Wolpert, Scott A},
  journal={Duke Math. J.},
  pages={281--303},
  year={2009}
}

@article{morse1959topologically,
  title={{Topologically non-degenerate functions on a compact $n$-manifold $M$}},
  author={Morse, Marston},
  journal={Journal d’Analyse Math{\'e}matique},
  volume={7},
  number={1},
  pages={189--208},
  year={1959},
  publisher={Springer}
}

@article{chen2023morse,
  title={{Morse theory on moduli of curves}},
  author={Chen, Changjie},
  journal={arXiv:2307.16119},
  year={2023}
}

@article{chen2023index,
  title={{Index gap of the systole function}},
  author={Chen, Changjie},
  journal={arXiv:2309.05801},
  year={2023}
}

@article{bourque2020hyperbolic,
  title={{Hyperbolic surfaces with sublinearly many systoles that fill}},
  author={Fortier Bourque, Maxime},
  journal={Commentarii Mathematici Helvetici},
  volume={95},
  number={3},
  year={2020}
}

@article{tosteson2021stability,
  title={{Stability in the homology of Deligne--Mumford compactifications}},
  author={Tosteson, Philip},
  journal={Compositio Mathematica},
  volume={157},
  number={12},
  pages={2635--2656},
  year={2021},
  publisher={London Mathematical Society}
}

@book{milnor2016morse,
  author={Milnor, John},
  title={{Morse Theory}},
  year={2016},
  publisher={Princeton university press}
}

@article{deligne1969irreducibility,
  title={{The irreducibility of the space of curves of given genus}},
  author={Deligne, Pierre and Mumford, David},
  journal={Publications Math{\'e}matiques de l'IHES},
  volume={36},
  pages={75--109},
  year={1969}
}

@article{harer1986virtual,
  title={{The virtual cohomological dimension of the mapping class group of an orientable surface}},
  author={Harer, John L},
  journal={Inventiones mathematicae},
  volume={84},
  number={1},
  pages={157--176},
  year={1986},
  publisher={Springer-Verlag Berlin/Heidelberg}
}

@article{chan2021moduli,
  title={{Moduli spaces of curves: classical and tropical}},
  author={Chan, Melody},
  journal={Notices of the American Mathematical Society},
  volume={68},
  number={10},
  pages={1--1713},
  year={2021}
}

@article{masur1976extension,
  title={{The extension of the Weil-Petersson metric to the boundary of Teichmüller space}},
  author={Masur, Howard},
  journal={Duke Math. J.},
  volume={43},
  number={1},
  pages={623--635},
  year={1976}
}

@article{barta2012every,
  title={{Every ordinary differential equation with a strict Lyapunov function is a gradient system}},
  author={B{\'a}rta, Tom{\'a}{\v{s}} and Chill, Ralph and Fa{\v{s}}angov{\'a}, Eva},
  journal={Monatshefte f{\"u}r Mathematik},
  volume={166},
  number={1},
  pages={57--72},
  year={2012},
  publisher={Springer}
}

@article{bily2014transformations,
  title={{Transformations of ODEs into gradient systems in stationary points}},
  author={B{\'\i}l{\`y}, Michael},
  year={2014},
  publisher={Univerzita Karlova, Matematicko-fyzik{\'a}ln{\'\i} fakulta}
}

@book{hatcher_algebraic_2001,
  address = {New York},
  title = {Algebraic topology},
  isbn = {978-0-521-79540-1},
  language = {eng},
  publisher = {Cambridge university press},
  author = {Hatcher, Allen},
  year = {2001},
}

@inproceedings{canning2024extensions,
  title={{Extensions of tautological rings and motivic structures in the cohomology of $\mgnb$}},
  author={Canning, Samir and Larson, Hannah and Payne, Sam},
  booktitle={Forum of Mathematics, Pi},
  volume={12},
  pages={e23},
  year={2024},
  organization={Cambridge University Press}
}

\end{document}